\newcommand{\mb}[1]{\mathbf{#1}}
\newcommand{\bb}[1]{\mathbb{#1}}
\newcommand{\pard}[2]{\frac{\partial #1}{\partial #2}}
\newcommand{\mbf}[1]{\mathbf{#1}}
\newcommand{\mbb}[1]{\mathbb{#1}}
\newcommand{\ho}{\left(\frac{d}{dt} -\Delta \right)}
\newcommand{\ddt}[1]{\frac{ d #1}{dt}}
\newcommand{\ip}[2]{\left \langle #1 , #2 \right\rangle}
\newcommand{\n}{\nabla}
\newcommand{\ov}{\overline}
\newcommand{\dmu}{d\check{\mu}}
\newcommand{\mbt}{\boldsymbol{\theta}}
\begin{document}
\theoremstyle{plain}
\newtheorem{theorem}{Theorem}[section]
\newtheorem{lemma}[theorem]{Lemma}
\newtheorem{claim}[theorem]{Claim}
\newtheorem{prop}[theorem]{Proposition}
\newtheorem{cor}[theorem]{Corollary}

\theoremstyle{definition}
\newtheorem{defses}[theorem]{Definition}
\newtheorem{assumption}[theorem]{Assumption}

\theoremstyle{remark}
\newtheorem{remark}[theorem]{Remark}

\begin{center}

{\LARGE \bfseries{The constant angle problem for mean curvature flow inside rotational tori}}\\[20pt]

by \\
\textsc{Ben Lambert} \\
\emph{Bath University}\\
\texttt{b.lambert@bath.ac.uk}
\end{center}

\begin{abstract}
We flow a hypersurface in Euclidean space by mean curvature flow with a Neumann boundary condition, where the boundary manifold is any torus of revolution. If we impose the conditions that the initial manifold is compatible and does not contain the rotational vector field in its tangent space, then mean curvature flow exists for all time and converges to a flat cross-section as $t \rightarrow \infty$. 
\end{abstract}

\begin{center}
\emph{Mathematics Subject Classification: 53C44  53C17  35K59}
\end{center}

\section{Introduction}
We consider Mean Curvature Flow (MCF) of hypersurfaces with a Neumann boundary condition, choosing the boundary manifold to be an $n$-dimensional torus of rotation of any profile embedded in $(n+1)$-dimensional Euclidean space. If the initial manifold is compatible with the boundary condition and is transversal to the rotational vector field, then the flow exists for all time and converges to a flat cross section. A tool regularly used with MCF with a Neumann boundary condition is to assume convexity of the boundary manifold \cite{AltschulerWu}\cite{Stahlsecond}, which gives a sign on boundary derivatives and allows application of Hopf maximum principle. Clearly we cannot impose this in the case of the torus. Instead we observe that we are essentially considering a graphical problem and use a Stampacchia iteration argument similar to that used by Huisken \cite{Huiskengraph} for graphical mean curvature flow. 

Mean curvature flow with a Neumann boundary condition has been considered as graphs in the perpendicular case by Huisken \cite{Huiskengraph}, and also for more general angles in dimension $2$ by Altschuler and Wu \cite{AltschulerWu}. In both cases the flow exists for all time and converges to a special solution: In the first case to a flat plane, while in the second to a translating solution. The level set method in the case of the right angled Neumann condition in a convex cylinder has been studied by Giga, Ohnuma and Sato \cite{GigaOhnumaSatoNeumann}. Considering the perpendicular angle condition further, Stahl\cite{Stahlsecond}\cite{Stahlfirst} showed that if the boundary manifold is a totally umbillic surface and the initial manifold is convex, then under mean curvature flow the manifold shrinks to a point. Furthermore, renormalising homothetically about this point the flow converges to a half sphere. Buckland \cite{Buckland} used a monotonicity argument (again with a perpendicular boundary condition) to classify the Type I boundary singularities of a mean convex initial surface. The perpendicular Neumann boundary condition has also been considered in flat Minkowski space by the author \cite{LambertMinkowski}, where the boundary manifold was chosen to be a convex timelike cone and the rescaled flow converges to a hyperbolic hyperplane. 

Suppose $\Sigma \subset \bb{R}^{n+1}$ is a smooth orientable hypersurface with outward pointing unit normal vector $\mu$. Following Stahl \cite{Stahlfirst} we say \linebreak\mbox{$\mathbf{F}: M^n \times [0,T] \rightarrow \bb{R}^{n+1}$} satisfies \emph{Mean Curvature Flow with a Neumann free boundary condition $\Sigma$} if
\begin{equation}
\label{MCF}
\begin{cases}
\frac{d \mathbf{F}}{dt} = \mathbf{H}= H \nu & \forall (x,t) \in M^n \times [0,T]\\
\mathbf{F}(\cdot,0)= M_0&\\
\mathbf{F}(x,t) \subset \Sigma & \forall (x,t) \in \partial M^n \times [0,T]\\
\ip{\nu}{\mu}(x,t)=0 & \forall (x,t) \in \partial M^n \times [0,T]\ \ ,\\
\end{cases}
\end{equation}
where $\nu(x,t)$ is the normal to $\mbf{F}$ at time $t$. We will often write $M_t=\mb{F}(\cdot, t)$. In this paper we choose $\Sigma$ to be a rotationally symmetric torus of any profile -- topologically $S^1 \times S^{n-1}$  -- and we flow a disk contained within the interior of this torus. At a point $\mb{p}=(p_1, \ldots, p_{n+1}) \in \bb{R}^{n+1}$ we define
\[r=\sqrt{p_n^2 +p_{n+1}^2}, \ \ \ \mb{r} =\frac{1}{r}(0, \ldots, 0, p_n, p_{n+1}), \ \ \ \mbt= \frac{1}{r}(0, \ldots, 0, -p_{n+1}, p_n)\ \ . \]

We prove the following:
\begin{theorem}
 Suppose $\Sigma$ is a torus of rotation and $M_0$ is an embedded disk satisfying the boundary condition which nowhere contains the vector field $\mbt$ in its tangent space, then a solution to equation (\ref{MCF}) with initial data $M_0$ exists for all time and converges uniformly to a flat cross-section of the torus.
\end{theorem}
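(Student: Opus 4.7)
The plan is to exploit the rotational symmetry of $\Sigma$ by viewing the flow as a graphical problem over the rotational direction $\mbt$, and then to adapt Huisken's Stampacchia iteration from \cite{Huiskengraph}. Because $\Sigma$ is invariant under the $S^1$-action generated by $\mbt$, the field $\mbt$ is tangent to $\Sigma$, and the hypothesis that $\mbt$ is nowhere tangent to $M_0$ is equivalent to $\ip{\nu}{\mbt}\neq 0$ initially. After fixing orientation I may assume $\ip{\nu}{\mbt}>0$ at $t=0$, and the central task is to preserve a positive lower bound $\ip{\nu}{\mbt}\geq c_0 > 0$ along the flow; this is the analogue of the gradient function in the graphical setting.

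The first step is therefore a gradient-type estimate. I would compute the evolution equation of $\ip{\nu}{\mbt}$, using that $\mbt$ is Killing on $\bb{R}^{n+1}$ (so $\ov{\n}\mbt$ is pointwise skew-symmetric) to obtain the schematic form $\ho \ip{\nu}{\mbt} = |A|^2\ip{\nu}{\mbt}+\text{l.o.t.}$, to which the interior parabolic maximum principle applies. The delicate point is the boundary: at a point of $\partial M_t\subset\Sigma$ the Neumann condition gives $\ip{\nu}{\mu}=0$ while rotational invariance of $\Sigma$ gives $\ip{\mbt}{\mu}=0$, so both $\nu$ and $\mbt$ lie in $T\Sigma$. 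Combining these orthogonalities with the skew-symmetry of $\ov{\n}\mbt$, I would compute $\n_\mu\ip{\nu}{\mbt}$ and show it is controlled by $\ip{\nu}{\mbt}$ itself, with the sign needed for a Hopf boundary argument. This replaces the convexity assumption used in \cite{AltschulerWu,Stahlsecond} and yields the uniform lower bound $\ip{\nu}{\mbt}\geq c_0 > 0$.

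With the gradient bound in hand, I would run the Stampacchia iteration on a quantity of the form $|A|^2\ip{\nu}{\mbt}^{-p}$ for a suitably chosen exponent $p$: test the evolution against $(|A|^2\ip{\nu}{\mbt}^{-p}-k)_+^q$, integrate by parts over $M_t$, and use the gradient estimate to absorb the reaction term while invoking the rotational-symmetry identities at the boundary once more to handle the boundary integrals. The outcome is a pointwise bound on $|A|$. \emph{This is the step I expect to be the main technical obstacle}, precisely because the torus is not convex, so the favourable sign on the boundary integral usually produced by the second fundamental form of $\Sigma$ is unavailable and must be replaced by the structure coming from the Killing field $\mbt$.

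Once $|A|$ is bounded, standard parabolic Schauder estimates yield full regularity on each finite interval and the flow extends to all $t\in[0,\infty)$. For convergence, the area of $M_t$ is monotone decreasing and bounded below, so $\int_0^\infty\int_{M_t}H^2 <\infty$; combined with the uniform gradient and curvature bounds, a subsequence $M_{t_k}$ converges smoothly to a minimal hypersurface in $\ov{\Sigma}$ satisfying the Neumann condition and still transversal to $\mbt$. Such a hypersurface must be contained in a single half-hyperplane through the rotation axis, hence is a flat cross-section. To promote subsequential to full convergence I would linearise at the limit: the linearised operator is the Neumann heat operator on a flat $n$-ball, whose only zero mode corresponds to sliding along the $S^1$-family of cross-sections, so the nonzero modes decay exponentially and $M_t$ converges uniformly to a single cross-section.
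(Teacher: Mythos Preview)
Your overall architecture --- treat the flow as graphical in the $\mbt$-direction, prove a gradient estimate, then bootstrap --- matches the paper. The gap is in how you obtain the gradient estimate. You propose to bound $\ip{\nu}{\mbt}$ from below by a Hopf boundary argument, claiming that the Killing structure of $\mbt$ supplies the sign needed at $\partial M$. It does not. With $w=\ip{\nu}{r\mbt}$, the computation (Lemma~\ref{evolvQ}) gives
\[
\nabla_\mu w = w\,A^\Sigma(\nu,\nu),
\]
so the boundary derivative is indeed proportional to $w$, but the coefficient is the second fundamental form of $\Sigma$ in the $\nu$-direction, which has no definite sign on a torus. At a boundary minimum of $w$ the Hopf lemma yields nothing. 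This is exactly the obstruction the paper flags in the introduction as the reason convexity-based methods fail here; the rotational symmetry produces the clean formula above but cannot substitute for convexity.

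The paper's remedy is to run the Stampacchia iteration on the \emph{gradient function} $Q=\log v=-\log w$ itself, not on the curvature as you suggest. The point of going through integrals is precisely that the bad boundary contribution $\int_{\partial M}|A^\Sigma(\nu,\nu)|\,Q_k^{p-1}$ is merely bounded, not signed, and can be absorbed using the boundary Michael--Simon--Sobolev inequality (Lemma~\ref{Mbndryint}, Corollary~\ref{fullsobolev}); the favourable interior term is produced from the evolution of $e^{\lambda\theta}$ (Lemma~\ref{Qkpestimate}), and a time-independent bound on $\|A(k)\|$ comes from a separate parabolic $L^2$ estimate on $|\mbt^\top|$ (Proposition~\ref{thefinalestimate}). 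Once $v$ is bounded, no curvature iteration is needed: equation~(\ref{pMCF}) is uniformly parabolic with a \emph{linear} Neumann condition, so higher estimates and long-time existence follow from standard quasilinear theory. Convergence is also handled more simply than by linearisation: the $L^2$ space-time bounds on $H$ and $|\mbt^\top|$ translate into $\int_0^\infty\!\int_\Omega \big(u_t^2+|Du|^2\big)\,dx\,dt<\infty$, which together with the monotone sup and inf of $\theta$ forces $u$ uniformly to a constant.
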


\begin{figure} \label{jestershat}
\includegraphics[width=\textwidth]{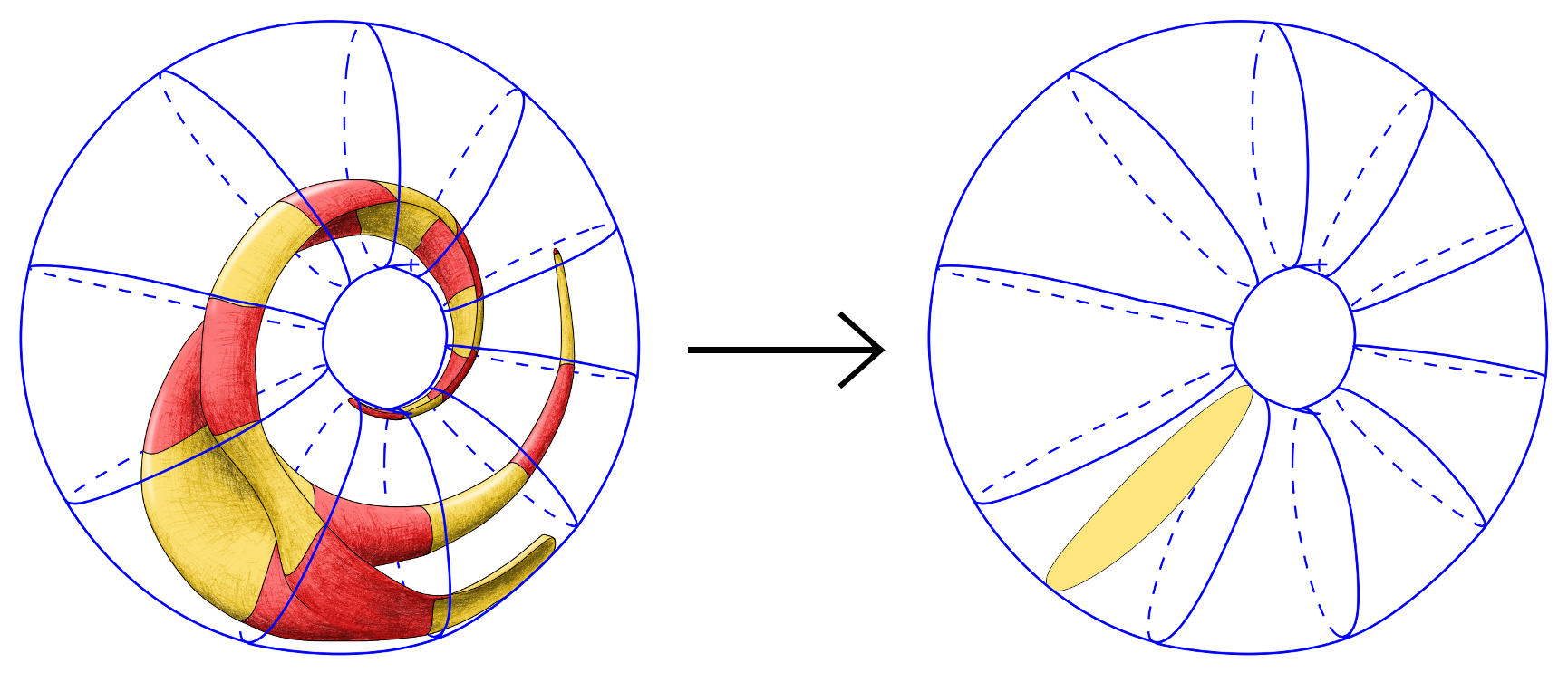}
\caption{A jesters hat initial manifold is taken to a flat disc as $t\rightarrow \infty$.}
\end{figure}
The proof uses an integral iteration technique to obtain the cruicial gradient estimates. In \cite{Huiskengraph}, Huisken uses similar arguments in the case of a cylindrical boundary manifold. The advantage of this method is that boundary curvature is less of an issue; we require boundedness of the derivatives of certain functions as opposed to a sign on them. We remark that this theorem allows some unusual initial manifolds, for example the disc may wrap itself around the inside of the torus several times (see Figure \ref{jestershat}) as long as it is transversal to the vector field generated by the group of rotations.

For this paper we will need various geometric quantities on various manifolds. A bar will imply quantities on $\mbb{R}^{n+1}$, for example $\overline \Delta, \overline \nabla, \ldots$ and so on; no extra markings $\Delta, \nabla, \ldots $ will refer to geometric quantities on $M_t$ the flowing surface at time $t$ and for any other manifold $Z$,  $ \Delta^Z,  \nabla^Z, \ldots \text{etc.}$ will refer to the Laplacian, covariant derivatives, $\ldots$ on $Z$. We will define the volume form on $M_t$ to be $\check{\mu}$ and define $\check{\mu}_\partial$ to be the volume form on $\partial M_t$.

%The contents of this paper first appeared in the author's PhD thesis at Durham University. The author would like to thank his supervisor \mbox{W. Klingenberg} for the advice, support and interesting discussions without which this paper could not have happened.

\section{The torus}\label{thetorus}
In this section we make some remarks about $\Sigma$, a torus of revolution. We define the half space $\bb{R}^n_+=\left\{(x_1, \ldots, x_n,0)| x_i \in \bb{R}, x_n>0 \right\}\subset \bb{R}^{n+1}$, where we will sometimes write $\mb{y}=x_1 \mb{e}_1+ \ldots x_{n-1} \mb{e}_{n-1}$ and $x_n=r$. Suppose we have any compact domain $\Omega\subset \bb{R}^n_+$ with smooth boundary $\partial \Omega$ parametrised by $\mb{P}: S^{n-1} \rightarrow \bb{R}^n_+$, then by rotating in the $\{ \mb{e}_n, \mb{e}_{n+1}\}$--plane we define $\Sigma$ to be the torus of revolution $\partial \Omega$ sweeps out. Since $\mbt$ is the direction of the rotation, at a point $p\in \Sigma$ we know that $\mbt(p) \in T_p\Sigma$. 

We will require the values of the second fundamental form of $\Sigma$ in the direction $\mbt$ explicitly. We parametrise $\Sigma$ by
\[\mb{J}(x, \theta) = \mb{P}(x) - \ip{\mb{P}(x)}{\mb{e}_n}\mb{e}_n +\ip{\mb{P}(x)}{\mb{e}_n}\left[ \cos \theta \mb{e}_n + \sin \theta \mb{e}_{n+1} \right]\ \ .\]
If $\nu^P$ is the outward pointing unit normal to $\mb{P}$ in $\bb{R}^n$ then $\mu$, the outward pointing unit normal to $\Sigma$ in $\bb{R}^{n+1}$ is given by
\[\mu=\nu^P - \ip{\nu^P}{\mb{e}_n}\mb{e}_n + \ip{\nu^P}{\mb{e}_n}\left[ \cos \theta \mb{e}_n + \sin \theta \mb{e}_{n+1}\right]\ \ .\]
We may easily see that
\[ \frac{\partial^2 \mb{J}}{\partial x^i \partial \theta} =\ip{\pard{\mb{P}}{x^i}}{\mb{e}_n}\left[ -\sin \theta \mb{e}_n + \cos \theta \mb{e}_{n+1} \right]\ \  \]
is perpendicular to $\mu$. Therefore we know that the direction $\pard{\mb{J}}{\theta}=r\mbt$ is an eigenvector of $A^\Sigma(\cdot, \cdot)$. The eigenvalue may be calculating by writing
\[\frac{\partial^2 \mb{J}}{\partial^2 \theta} = -\ip{\mb{P}}{\mb{e}_n}\left[\cos \theta \mb{e}_n +\sin \theta \mb{e}_{n+1} \right]\]
and so $A^\Sigma(r\mbt, r\mbt)=-\ip{\nu^P}{\mb{e}_n}\ip{\mb{P}}{\mb{e}_n}=-r\ip{\mu}{\mb{r}}$ .  

It will also be useful to consider the flowing manifold as a graph over $\Omega$ in the interior of the torus, when this is possible. At every point $x \in \Omega$ we assign $u$, the angle through which we need to rotate $x$ about $\mb{0}$ to hit the manifold, so that we may parametrise the manifold inside the torus by $\mb{F}: \Omega \rightarrow \bb{R}^{n+1}$,  where
\begin{equation} 
\mb{F}(\mb{x}) = \mb{y} + r(\cos(u) \mb{e}_n + \sin(u) \mb{e}_{n+1})\ \ . \label{paramF} 
\end{equation}

We may now compute all standard geometric quantities with respect to $u$ by standard methods. For example
\[g_{ij} = \delta_{ij} + r^2D_i u D_j u \ \ ,\]
\[\nu = \frac{ -r( \mb{y} +  D_n u (\cos(u) \mb{e}_n + \sin(u) \mb{e}_{n+1})) -\sin(u) \mb{e}_n + \cos(u) \mb{e}_{n+1}}{\widetilde{v}}\ \ ,\]
where we define $\widetilde{v} = \sqrt{\det ( g_{ij} )} = \sqrt{1 + r^2 |Du|^2}$. Similarly we may calculate the equations for mean curvature flow in these coordinates for $M_0$ a manifold which may be parametrised as above by $u_0$. Such a parametrisation will always be possible when $M_0$ is transversal to $\mbt$, although the range of $u_0$ may be more than $2\pi$. Considering graphically, equation (\ref{MCF}) is equivalent to 
\begin{equation}
\begin{cases}\label{pMCF}
u(x, 0)= u_0(x) & \forall x\in \Omega\\
\pard{u}{t} = g^{ij} D_{ij} u +\frac{D_n u}{r}\left(1 +\frac{1}{\widetilde{v}^2}\right)= -\frac{H \widetilde{v}}{r} &\forall x\in \Omega\\
\gamma \cdot Du =0 & \forall x \in \partial \Omega
\end{cases}
\end{equation}
where $\gamma$ is the outward unit normal to $\partial \Omega$ and $g^{ij}$ is the inverse of the metric in this parametrisation. We note that uniform parabolicity of the above is equivalent to the gradient estimate $\widetilde{v}<C< \infty$. We also remark that $\widetilde{v} = \frac{1}{\ip{\nu}{-\sin(u) \mb{e}_n + \cos(u) \mb{e}_{n+1}}} = \frac{1}{\ip{\nu}{\mbt|_\mb{F}}}$.

\section{Evolution equations, boundary derivatives and initial estimates}
Here we will obtain initial estimates on various quantities via a maximum principle of the following form:
\begin{theorem}[Weak Maximum Principle]
\label{WMP}
 Suppose we have a function $f:M^n \times [0,T) \rightarrow \mbb{R}$ then if $f$ satisfies
\begin{equation*}
 \begin{cases}
  \displaystyle \ho f(\mbf p , t) \leq 0 & \forall (\mbf p , t) \in M^n \times [0,T) \text{ such that } \nabla f(\mbf p) =0\\
    \ip{\nabla f }{ \mu } \leq 0 & \forall (\mbf p , t) \in \partial M^n \times [0,T)\\
 \end{cases}
\end{equation*}
then $f(\mbf x ,t ) \leq \underset{\mbf p \in M^n} \sup f ( \mbf p ,0 )$ for all $(\mbf x , t) \in M^n \times [0,T)$.
\end{theorem}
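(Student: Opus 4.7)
This is the familiar weak maximum principle for an oblique-derivative/Neumann parabolic problem, stated in the relaxed form that the parabolic inequality need only hold at spatial critical points. My plan is the classical perturbation argument: set $f_\epsilon(\mb{p}, t) = f(\mb{p}, t) - \epsilon t$ for small $\epsilon > 0$ and assume, for a contradiction, that $f_\epsilon$ exceeds $C := \sup_{M^n} f(\cdot, 0)$ somewhere on $M^n \times [0, T')$. Using compactness of $M^n$, I would pick a point $(\mb{p}_0, t_0)$ with $t_0 > 0$ at which the maximum of $f_\epsilon$ over some slab $M^n \times [0, T']$ with $T' < T$ is realised. Since $f_\epsilon(\cdot, 0) \leq C$, this forces $\ddt{f_\epsilon}(\mb{p}_0, t_0) \geq 0$, hence $\ddt{f}(\mb{p}_0, t_0) \geq \epsilon$.

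\textbf{Interior and boundary cases.} If $\mb{p}_0 \in \mathrm{int}(M^n)$ then the spatial gradient $\nabla f$ vanishes and the spatial Hessian is negative semidefinite at $(\mb{p}_0, t_0)$, hence $\Delta f \leq 0$; the first hypothesis gives $\ho f \leq 0$ while the time-derivative bound gives $\ho f \geq \epsilon$, a contradiction. If $\mb{p}_0 \in \partial M^n$, I would first show $\nabla f(\mb{p}_0, t_0) = 0$: the tangential component must vanish because $f(\cdot, t_0)|_{\partial M^n}$ attains a maximum at $\mb{p}_0$, while the outward normal component $\ip{\nabla f}{\mu}$ must be $\geq 0$ (moving inward from a maximum cannot increase $f$) and $\leq 0$ by the Neumann hypothesis. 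The first hypothesis is therefore applicable at this boundary point as well.

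\textbf{Hessian at a boundary maximum.} To close the argument I still need $\Delta f(\mb{p}_0, t_0) \leq 0$. With $\nabla f(\mb{p}_0, t_0) = 0$ in hand, for every $w \in T_{\mb{p}_0} M^n$ pointing inward or tangentially the geodesic $\gamma(s) = \exp_{\mb{p}_0}(sw)$ lies in $M^n$ for small $s \geq 0$, so Taylor expanding $f \circ \gamma$ at $s = 0$ yields $\mathrm{Hess}(f)(w, w) \leq 0$; replacing $w$ by $-w$ covers the outward directions, whence the full spatial Hessian is negative semidefinite and $\Delta f \leq 0$. Combined with $\ddt{f}(\mb{p}_0, t_0) \geq \epsilon$ this again contradicts the first hypothesis, and sending $\epsilon \to 0$ completes the proof. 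The main obstacle is precisely this boundary second-order step: it looks subtle because only tangential derivatives are \emph{a priori} available, but dissolves once the Neumann condition together with maximality forces $\nabla f$ to vanish, making the inward-geodesic Taylor comparison available in every direction.
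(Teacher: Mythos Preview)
The paper does not supply a proof of this theorem; it is stated as a standard tool and used without argument. Your proposal therefore cannot be compared line-by-line against anything in the paper, but it is the classical $f_\epsilon = f - \epsilon t$ perturbation argument and is essentially correct.

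One small point to tighten. In the boundary Hessian step you say that for $w$ tangent to $\partial M^n$ the geodesic $\exp_{\mb{p}_0}(sw)$ lies in $M^n$ for small $s\geq 0$; that need not be literally true when $\partial M^n$ curves. The clean fix is to argue only for strictly inward $w$ (where the geodesic does stay inside for small $s>0$), conclude $\mathrm{Hess}\,f(w,w)\leq 0$ there, and then pass to tangent and outward $w$ by continuity together with the quadratic symmetry $\mathrm{Hess}\,f(-w,-w)=\mathrm{Hess}\,f(w,w)$. With that adjustment the boundary case closes exactly as you describe, and the rest of the argument (first-variation forcing $\nabla f=0$ at a boundary maximum via the Neumann hypothesis, then applying the relaxed parabolic inequality) is sound.
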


We will repeatedly use the following easily verified relations
\begin{equation} \label{eqdrt}
\ov \n r = \mb{r}, \qquad \ov \n_X \mb{r} = \ip{X}{\mbt} \frac{1}{r} \mbt, \qquad  \ov \n_X \mbt = - \ip{X}{\mbt} \frac{1}{r} \mb{r} \ \ . 
\end{equation}

\begin{lemma}\label{evolu}
 Let $\theta$ be the angle around the torus, taken from some arbitrary base angle. Then
\[ \ho \theta = \frac{2}{r}\ip{\mb{r}^\top}{\n \theta} = -\frac{2}{r^2}\ip{\nu}{\mbt}\ip{\nu}{\mb{r}}\ \ ,\]
and at the boundary
\[ \n_\mu \theta = 0\ \ .\]
\end{lemma}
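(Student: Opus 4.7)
The plan is to extract $\theta$ from its ambient-space definition and push everything through from the identity $\bar\nabla\theta = \frac{1}{r}\mbt$. This follows immediately from the formula $\theta = \arctan(p_{n+1}/p_n)$ (equivalently, from the fact that $r\mbt$ generates the rotation in the $\{\mb{e}_n,\mb{e}_{n+1}\}$-plane), and the same coordinate expression shows $\bar\Delta\theta = 0$.

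The main computation would be the ambient Hessian of $\theta$. Applying $\bar\nabla_X$ to $\frac{1}{r}\mbt$ via the product rule and inserting the relations~(\ref{eqdrt}) yields
\[ \bar\nabla_X\!\left(\tfrac{1}{r}\mbt\right) = -\tfrac{1}{r^2}\ip{X}{\mb{r}}\mbt - \tfrac{1}{r^2}\ip{X}{\mbt}\mb{r}, \]
so that $\bar\nabla^2\theta(X,X) = -\frac{2}{r^2}\ip{X}{\mb{r}}\ip{X}{\mbt}$; in particular $\bar\nabla^2\theta(\nu,\nu) = -\frac{2}{r^2}\ip{\nu}{\mb{r}}\ip{\nu}{\mbt}$.

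To assemble $\ho\theta$, I would combine the time derivative along the flow, $\pard{\theta}{t} = \ip{H\nu}{\bar\nabla\theta} = \frac{H}{r}\ip{\nu}{\mbt}$, with the standard restriction formula $\Delta\theta = \bar\Delta\theta - \bar\nabla^2\theta(\nu,\nu) + H\ip{\nu}{\bar\nabla\theta}$. Since $\bar\Delta\theta = 0$, the $H$-contributions cancel and I am left with
\[ \ho\theta \;=\; \bar\nabla^2\theta(\nu,\nu) \;=\; -\tfrac{2}{r^2}\ip{\nu}{\mb{r}}\ip{\nu}{\mbt}, \]
which is the second equality. For the first, I would write $\nabla\theta = (\bar\nabla\theta)^\top = \frac{1}{r}\mbt^\top$ and use $\ip{X^\top}{Y^\top} = \ip{X}{Y} - \ip{X}{\nu}\ip{Y}{\nu}$ together with $\ip{\mb{r}}{\mbt} = 0$; this gives $\ip{\mb{r}^\top}{\nabla\theta} = -\frac{1}{r}\ip{\mb{r}}{\nu}\ip{\mbt}{\nu}$, and multiplication by $\frac{2}{r}$ matches.

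The Neumann condition is the short step: $\n_\mu\theta = \ip{\mu}{\bar\nabla\theta} = \frac{1}{r}\ip{\mu}{\mbt}$, and this vanishes because Section~\ref{thetorus} exhibits $\mbt$ as tangent to $\Sigma$ while $\mu$ is a unit normal to $\Sigma$. I anticipate no substantive obstacle; the one delicate point is keeping the sign conventions for $H$ and the restriction formula consistent, so that the $H$-terms in $\pard{\theta}{t}$ and $\Delta\theta$ genuinely cancel.
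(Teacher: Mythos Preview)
Your proposal is correct and follows essentially the same route as the paper: both start from $\bar\nabla\theta=\tfrac{1}{r}\mbt$, differentiate using the relations~(\ref{eqdrt}), and combine with the time derivative so that the $H$-terms cancel. The only difference is packaging---you invoke the ambient Hessian and the restriction formula $\Delta\theta=\bar\Delta\theta-\bar\nabla^2\theta(\nu,\nu)+H\ip{\nu}{\bar\nabla\theta}$ together with $\bar\Delta\theta=0$, whereas the paper expands $\Delta\theta$ directly on $M$ and arrives first at the tangential form $\tfrac{2}{r^2}\ip{\mb r^\top}{\mbt^\top}$; your caveat about keeping the $H$-sign conventions consistent is exactly the point where the two computations must agree.
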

\begin{proof} 
 Using cylindrical coordinates on $\bb{R}^{n+1}$ we see that $\ov \n \theta = \frac{\mbt}{r}$ and from this we may calculate the evolution equation of $\theta$. We see that
\[\frac{d \theta}{dt} = -\frac{H}{r}\ip{\nu}{\mbt}\ \ ,\]
and
\begin{flalign*} 
\Delta \theta &= g^{ij} \bigg(\ip{-\ip{\mb{r}}{\pard{}{x^i}}\frac{\mbt}{r^2} -\ip{\pard{}{x^i}}{\mbt} \frac{\mb{r}}{r^2}}{\pard{}{x^j}} - \ip{\frac{\mbt}{r}} {h_{ij}\nu}\bigg)\\ 
&= - \frac{2}{r^2}\ip{\mb{r}^\top}{\mbt^\top} -H \ip{\frac{\mbt}{r}}{\nu}\ \ .
\end{flalign*}
Therefore
\begin{equation*}
\ho \theta = \frac{2}{r^2} \ip{\mb{r}^\top}{\mbt^\top}=\frac{2}{r}\ip{\mb{r}^\top}{\n \theta}\ \ .
 \end{equation*}

At the boundary we have that $\mbt (\mb{p}) \in T_\mb{p} \Sigma$, and therefore
\[\ip{\n \theta}{\mu}=\ip{\left( \frac{\mbt}{r} \right)^\top}{\mu}=\ip{\frac{\mbt}{r} - \frac{\ip{\mbt}{\nu}}{r}\nu}{\mu}=0\ \ .\]
\end{proof}
Since the above only depends on the \emph{derivatives} of $\theta$, the same equations hold for manifolds that wrap themselves around the torus more than once (as in Figure \ref{jestershat}) by simply extending the range of $\theta$ to be more than $2\pi$. The function $u$ defined in the previous section is an example of this, and \emph{on the flowing manifold} $u$ will satisfy the same evolution equation as $\theta$. From now on we use the extended $\theta$.

\begin{cor}\label{thetabound}
The function $\theta(\cdot, t)$ is bounded above by its initial supremum and below by its initial infimum.
\end{cor}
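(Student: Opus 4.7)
The plan is to apply the Weak Maximum Principle (Theorem \ref{WMP}) directly to $\theta$ and to $-\theta$, using the evolution equation and boundary derivative computed in Lemma \ref{evolu}. The key observation is that the drift term in
\[ \ho \theta = \frac{2}{r}\ip{\mb{r}^\top}{\n \theta} \]
vanishes at any spatial critical point of $\theta$, so the evolution hypothesis of the maximum principle is satisfied trivially.

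For the upper bound, I would check both hypotheses of Theorem \ref{WMP}. At any $(p,t)$ with $\n \theta(p,t) = 0$, the right-hand side $\frac{2}{r}\ip{\mb{r}^\top}{\n\theta}$ is zero, so $\ho \theta \le 0$ holds (with equality). At any boundary point, the lemma gives $\n_\mu \theta = 0 \le 0$. Hence Theorem \ref{WMP} yields $\theta(x,t) \le \sup_{p \in M^n} \theta(p,0)$ for all $(x,t) \in M^n \times [0,T)$.

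For the lower bound, I would apply exactly the same argument to $-\theta$. Since $\ho(-\theta) = -\frac{2}{r}\ip{\mb{r}^\top}{\n\theta} = \frac{2}{r}\ip{\mb{r}^\top}{\n(-\theta)}$, the drift term again vanishes whenever $\n(-\theta) = 0$, so $\ho(-\theta) \le 0$ at critical points. The boundary condition $\n_\mu(-\theta) = 0$ is immediate from the lemma. Theorem \ref{WMP} then gives $-\theta(x,t) \le \sup(-\theta(\cdot,0)) = -\inf \theta(\cdot, 0)$, i.e.\ the desired lower bound.

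There is essentially no obstacle here: the only thing to notice is that the evolution equation for $\theta$ contains no zeroth order term and only a gradient term, which is why the maximum principle applies with no sign hypothesis on the curvature of $\Sigma$, and the Neumann condition $\n_\mu \theta = 0$ is exactly what is needed on the boundary. This is why it was important in Lemma \ref{evolu} to rewrite the evolution equation in the form $\frac{2}{r}\ip{\mb{r}^\top}{\n \theta}$ rather than leaving it as $-\frac{2}{r^2}\ip{\nu}{\mbt}\ip{\nu}{\mb{r}}$.
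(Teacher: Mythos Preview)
Your proposal is correct and is exactly the argument the paper has in mind; the paper's own proof simply reads ``This follows directly from the above maximum principle.'' You have spelled out precisely why Theorem~\ref{WMP} applies to $\theta$ and $-\theta$, using the evolution equation and boundary derivative from Lemma~\ref{evolu}.
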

\begin{proof}
This follows directly from the above maximum principle.
\end{proof}
Therefore, the disk may not twist itself around the torus any more than it is twisted initially. The following will be required later:
\begin{lemma}\label{evolr}
 The function $r$ evolves by
\[ \ho r = -\frac{|\mbt^\top|^2}{r} \]
while at the boundary
\[ \n_\mu r = \ip{\mu}{\mb{r}}\ \ .\]
\end{lemma}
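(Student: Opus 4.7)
The plan is to compute the two quantities $\frac{dr}{dt}$ and $\Delta r$ separately and then combine them, using the ambient identities (\ref{eqdrt}) throughout.

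For the time derivative, since $r$ is a function on $\bb{R}^{n+1}$ pulled back to $M_t$ via $\mb{F}$, the chain rule combined with (\ref{eqdrt}) and the MCF equation immediately gives
\[
\frac{dr}{dt} = \ip{\ov\n r}{\tfrac{d \mb{F}}{dt}} = H\ip{\mb{r}}{\nu}.
\]
For the Laplacian, I would use the standard relation between the intrinsic Laplacian on $M_t$ and the ambient Hessian,
\[
\Delta r = \sum_i \ov\n^2 r(e_i,e_i) + H\ip{\ov\n r}{\nu},
\]
where $\{e_i\}$ is a local orthonormal frame of $TM_t$. The second identity in (\ref{eqdrt}) yields $\ov\n^2 r(X,Y) = \ip{\ov\n_X \mb{r}}{Y} = \frac{1}{r}\ip{X}{\mbt}\ip{Y}{\mbt}$, and tracing over the frame gives $\sum_i \ov\n^2 r(e_i,e_i) = \frac{|\mbt^\top|^2}{r}$. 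Subtracting, the $H\ip{\mb{r}}{\nu}$ terms cancel and we obtain $\ho r = -\frac{|\mbt^\top|^2}{r}$ as claimed.

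For the boundary equation, the Neumann condition $\ip{\nu}{\mu} = 0$ implies that along $\partial M_t$ the vector $\mu$ is tangent to $M_t$. Therefore $\n_\mu r = \mu(r) = \ip{\mu}{\ov\n r} = \ip{\mu}{\mb{r}}$, which gives the second claim. No real obstacle arises here — the only thing to watch is that one does not need to replace $|\mbt^\top|^2$ by $1 - \ip{\mbt}{\nu}^2$ as it already appears directly from the trace computation; and the boundary step is essentially automatic once the Neumann condition is invoked.
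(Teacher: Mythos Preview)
Your proof is correct and follows essentially the same approach as the paper: compute $\frac{dr}{dt}$ and $\Delta r$ separately using the ambient identities (\ref{eqdrt}), observe that the $H\ip{\mb{r}}{\nu}$ terms cancel, and read off the boundary derivative from the Neumann condition. The only cosmetic difference is that the paper computes $\Delta r$ directly in local coordinates via $g^{ij}(\partial_i\langle\mb{r},\partial_j\rangle - \langle\mb{r},\nabla_i\partial_j\rangle)$, whereas you invoke the standard trace formula $\Delta r = \mathrm{tr}_M(\ov\nabla^2 r) + H\ip{\ov\nabla r}{\nu}$; these are the same computation in different notation.
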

\begin{proof}
Similarly to Lemma \ref{evolu} we have $\ddt{r} = -H\ip{\mb{r}}{\nu}$ and calculate
\begin{flalign*}
 \Delta r &=g^{ij} \left( \pard{}{x^i} \ip{\mb{r}}{\pard{}{x^j}} - \ip{\mb{r}}{\n_\pard{}{x^i} \pard{}{x^j}} \right)\\
&=g^{ij} \left( \ip{\frac{1}{r}\ip{\pard{}{x^i}}{\mbt} \mbt}{\pard{}{x^j}} + \ip{\mb{r}}{\ov \n_\pard{}{x^i} \pard{}{x^j} - \n_\pard{}{x^i} \pard{}{x^j}}\right)\\
&=-H \ip{\nu}{\mb{r}}+\frac{1}{r}|\mbt^\top|^2\ \ .
\end{flalign*}
\end{proof}
 \begin{remark}\label{rbound}
 We will assume throughout that $r_0 \leq r \leq r_1$, a consequence of the flowing manifold staying within the torus. Certainly this will be true for all time that we have a gradient estimate, and apriori will be true for $t \in[0, \epsilon)$ for some small $\epsilon>0$.
 \end{remark}
We will need the following well known evolution equations:
\begin{lemma}\label{evols}
On the interior of a manifold moving by mean curvature flow the following hold 
\begin{flalign}
\frac{d \nu}{dt} &= \n H\\
 \frac{d g_{ij}}{dt} &= -2Hh_{ij} \label{dgijdt}\\
\ho H &= H|A|^2
\end{flalign}
\end{lemma}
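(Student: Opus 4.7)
The plan is to verify each of these three standard identities by direct computation from the flow equation $\ddt{\mb{F}}=H\nu$ together with the Gauss--Weingarten and Codazzi relations. Each calculation is routine and appears in essentially every standard MCF reference (e.g.\ Huisken's original work), so the task here is largely bookkeeping: one fixes sign conventions for $\nu$ and $h_{ij}$ and remains consistent throughout.

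For the metric evolution, I would differentiate $g_{ij}=\ip{\partial_i \mb{F}}{\partial_j \mb{F}}$ in $t$, interchange $\partial_t$ with $\partial_i$, substitute $\partial_t \mb{F}=H\nu$, and apply the Weingarten relation $\ip{\partial_i \nu}{\partial_j \mb{F}}=-h_{ij}$ to collect the two symmetric contributions into $-2Hh_{ij}$. For $\ddt{\nu}$, the unit-length constraint $|\nu|^2=1$ forces the time derivative to be tangential; extracting its components by differentiating $\ip{\nu}{\partial_i\mb{F}}=0$ in $t$ and substituting the flow equation gives $\ip{\ddt{\nu}}{\partial_i\mb{F}}$ in terms of $\partial_i H$, and contracting with $g^{ij}\partial_j\mb{F}$ then yields $\ddt{\nu}=\n H$ in the paper's orientation convention.

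For $\ho H=H|A|^2$, the cleanest route is first to derive the evolution of $h_{ij}$: differentiate the definition $h_{ij}=\ip{\ov\n_i\ov\n_j \mb{F}}{\nu}$, substitute $\partial_t \mb{F}=H\nu$, and commute covariant derivatives using Gauss--Codazzi (keeping ambient curvature terms zero since the ambient space is Euclidean). Tracing the resulting expression with $g^{ij}$, and using the metric evolution above to differentiate $g^{ij}$, the Simons-type identity $\Delta h_{ij}=\n_i\n_j H + H\,h_i{}^k h_{kj}-|A|^2 h_{ij}$ then collapses the computation to $\partial_t H - \Delta H = H|A|^2$.

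The main (and really only) obstacle is sign- and convention-management, which is pure bookkeeping rather than a conceptual difficulty; in particular one must be careful that the paper's choice $\mb{H}=H\nu$ (rather than the more common $\mb{H}=-H\nu$) does not introduce spurious signs in intermediate steps. Since the final three identities are insensitive to this choice, the result agrees with the standard formulae.
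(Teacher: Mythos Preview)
Your proposal is correct and follows exactly the standard computations from Huisken's original paper, which is precisely what the paper does here: its entire proof is a citation to \cite[Lemma~3.2, Lemma~3.3 and Corollary~3.5]{Huiskenconvex}. Your outline supplies the details that the paper defers to that reference, so there is nothing to add or correct.
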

\begin{proof}
 See for example \cite[Lemma 3.2, Lemma 3.3 and Corollary 3.5]{Huiskenconvex} .  
\end{proof}
The boundary derivative of $H$ and relations on the second fundamental forms of the flowing manifold and the boundary manifold for equation (\ref{MCF}) were calculated by Stahl \cite[Proposition 2.2, 2.4]{Stahlsecond}, and are summarised in the following Lemma:
\begin{lemma}\label{thankyoustahl}
At the boundary 
\[\n_\mu H = A^\Sigma(\nu, \nu)\]
and for $X \in T_\mb{p} M \cap T_\mb{p} \Sigma$ then
\[A^\Sigma(X, \nu) +A(X, \mu)=0\ \ .\] 
\end{lemma}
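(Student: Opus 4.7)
Both identities should follow by differentiating the Neumann boundary condition $\ip{\nu}{\mu}=0$: once tangentially along $\partial M$ for the second relation, and once in time along the flow for the derivative of $H$ in the direction of $\mu$. The key geometric input in both cases is the Neumann condition itself, which forces $\mu\in T_\mb{p}M$ and $\nu\in T_\mb{p}\Sigma$ at every boundary point.

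For the tangential relation, pick $X \in T_\mb{p} M \cap T_\mb{p} \Sigma$ and extend it to a vector field along $\partial M$. Since $\ip{\nu}{\mu}$ vanishes identically on $\partial M$,
\[0 = X \ip{\nu}{\mu} = \ip{\ov\n_X \nu}{\mu} + \ip{\nu}{\ov\n_X \mu}.\]
Because $\mu\in T_\mb{p}M$ and $\nu\in T_\mb{p}\Sigma$ along the boundary, the Weingarten relations on $M$ and on $\Sigma$ respectively identify the two inner products with $-A(X,\mu)$ and $-A^\Sigma(X,\nu)$, and the claim drops out.

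For the first relation I would differentiate $\ip{\nu}{\mu(\mb{F})}=0$ in $t$ at a boundary point. Using $\ddt{\nu} = \n H$ from Lemma~\ref{evols} together with $\mu\in T_\mb{p}M$, the first term becomes $\n_\mu H$. Since $\pard{\mb{F}}{t}=H\nu$ is tangent to $\Sigma$ along $\partial M$ (again because $\nu\in T_\mb{p}\Sigma$ there), the second term is $H\ip{\nu}{\ov\n_\nu \mu}$, which the Weingarten relation on $\Sigma$ rewrites as $-H A^\Sigma(\nu,\nu)$. Collecting these and reconciling sign conventions yields the stated boundary derivative identity.

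The main subtlety in the plan is careful bookkeeping of the sign conventions for $A$ and $A^\Sigma$, and making sure that $\ov\n_X\mu$ and $\ov\n_\nu\mu$ are interpreted as derivatives of the extended unit normal field of $\Sigma$ so that the Weingarten relations apply (this is legitimate precisely because the relevant directions $X$ and $\nu$ are tangent to $\Sigma$ at the boundary). Everything else reduces to pointwise linear algebra at a single boundary point.
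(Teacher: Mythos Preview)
The paper does not prove this lemma at all; it simply cites Stahl \cite[Propositions~2.2 and~2.4]{Stahlsecond}. Your plan is precisely the standard argument Stahl gives: differentiate the orthogonality $\ip{\nu}{\mu}=0$ tangentially along $\partial M$ for the second identity, and in time along the flow for the first. Both steps are correct as you describe them, including the observation that $\mu\in T_\mb{p}M$ and $\nu\in T_\mb{p}\Sigma$ at the boundary, which is exactly what makes the Weingarten relations applicable on each side.

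One genuine point, though: your time derivative computation produces
\[
0=\ddt{}\ip{\nu}{\mu}=\n_\mu H + H\ip{\nu}{\ov\n_\nu\mu}=\n_\mu H - H\,A^\Sigma(\nu,\nu),
\]
so what actually drops out is $\n_\mu H = H\,A^\Sigma(\nu,\nu)$, \emph{with an extra factor of $H$}. No ``reconciling of sign conventions'' will remove that $H$; the formula displayed in the lemma is missing it (and indeed Stahl's Proposition~2.2 has the $H$). This is the version the paper actually needs downstream: in Corollary~\ref{Hbound} one checks at the boundary that
\[
\n_\mu(H^2v^2)=2Hv^2\,\n_\mu H + 2H^2v\,\n_\mu v
=2H^2v^2A^\Sigma(\nu,\nu)-2H^2v^2A^\Sigma(\nu,\nu)=0,
\]
which only works with the $H$ factor in place. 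So your derivation is correct and the discrepancy is a typo in the stated lemma, not a flaw in your argument.
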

 
We now define the gradient function, $v= \frac{1}{\ip{r \mbt}{\nu}}$. Without loss of generality we may assume that this is positive and so may define the related function $Q=\log{v}$.
\begin{lemma}\label{evolvQ}
 The evolution equations for $Q$ and $v$ (while they are finite) are
\begin{flalign*} 
\ho Q &= -|A|^2 - |\n Q|^2\ \ ,\qquad  \ho v = -v|A|^2 - \frac{2}{v}|\n v|^2\ \ ,
\end{flalign*}
and the boundary derivatives are
\[\n_\mu Q = -A^\Sigma(\nu, \nu)\ \ , \qquad\qquad \n_\mu v = -v A^\Sigma(\nu, \nu) \ \ .\]
\end{lemma}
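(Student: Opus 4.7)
The core observation is that $X:=r\mbt$ is a Killing vector field on $\bb{R}^{n+1}$ (it generates the rotation in the $\{\mb{e}_n, \mb{e}_{n+1}\}$-plane), so $\ov\n X$ is pointwise antisymmetric. Writing $\phi := \ip{X}{\nu} = v^{-1}$, the function $Q=\log v = -\log\phi$, and the strategy is to first establish that $\phi$ satisfies the Jacobi-type equation $\ho\phi = |A|^2\phi$ (up to the sign conventions of the paper), and then transfer to $Q$ and $v$ by the chain rule.

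For the interior evolution of $\phi$, I would use $\frac{d\nu}{dt}=\n H$ from Lemma~\ref{evols} together with the MCF $\frac{d\mb F}{dt}=H\nu$ to compute
\[
\ddt{\phi} = \ip{\ov\n_{H\nu} X}{\nu} + \ip{X}{\n H} = \ip{X}{\n H},
\]
where the first term vanishes by antisymmetry of $\ov\n X$. To compute $\Delta\phi$, pick an orthonormal frame $\{e_i\}$ on $M$, write $\n_i\phi = \ip{\ov\n_{e_i} X}{\nu} + \ip{X}{\ov\n_{e_i}\nu}$, differentiate once more, and expand using $\ov\n_{e_i}\nu$ (the shape operator) and the antisymmetry $\ip{\ov\n_{e_i}X}{e_j}+\ip{\ov\n_{e_j}X}{e_i}=0$, together with the Codazzi equation. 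All told, the expansion produces $\Delta\phi = -|A|^2\phi - \ip{X}{\n H}$ (in the paper's sign convention), so subtracting,
\[
\ho\phi = |A|^2\phi.
\]

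The formulas for $Q$ and $v$ then drop out of the chain rule: since $v=\phi^{-1}$, $\n v = -\phi^{-2}\n\phi$, $\Delta v = -\phi^{-2}\Delta\phi + 2\phi^{-3}|\n\phi|^2$, so $\ho v = -\phi^{-2}\ho\phi + 2\phi^{-3}|\n\phi|^2 = -v|A|^2 - \tfrac{2}{v}|\n v|^2$, and similarly $\ho Q = -|A|^2 - |\n Q|^2$ using $\n Q = v^{-1}\n v$.

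\textbf{Boundary derivatives.} This is the place that requires care. At $\partial M$ we have $\nu\perp\mu$, so $\nu\in T_p\Sigma$, and $X=r\mbt\in T_p\Sigma$ by construction; hence the projection $X^\top = X - \phi\nu$ lies in $T_pM\cap T_p\Sigma$. Expanding
\[
\n_\mu\phi = \ip{\ov\n_\mu X}{\nu} + \ip{X}{\ov\n_\mu\nu} = \ip{\ov\n_\mu X}{\nu} \pm A(\mu, X^\top),
\]
I would apply three identities in succession: (i) Killing antisymmetry $\ip{\ov\n_\mu X}{\nu} = -\ip{\ov\n_\nu X}{\mu}$; (ii) the Gauss formula on $\Sigma$, which gives $\ip{\ov\n_\nu X}{\mu} = A^\Sigma(\nu,X)$ since $X,\nu\in T\Sigma$; and (iii) Stahl's identity from Lemma~\ref{thankyoustahl}, which converts $A(\mu, X^\top)$ into $-A^\Sigma(X^\top,\nu) = -A^\Sigma(X,\nu) + \phi A^\Sigma(\nu,\nu)$. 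The two $A^\Sigma(X,\nu)$ contributions cancel, leaving $\n_\mu\phi = \phi A^\Sigma(\nu,\nu)$ (in the paper's conventions), so $\n_\mu v = -\phi^{-2}\n_\mu\phi = -vA^\Sigma(\nu,\nu)$ and $\n_\mu Q = v^{-1}\n_\mu v = -A^\Sigma(\nu,\nu)$. The main obstacle is bookkeeping the signs in this three-identity combination, and in particular keeping track of which of $A$ and $A^\Sigma$ uses which orientation; once the convention is fixed, the cancellation of the $A^\Sigma(X,\nu)$ terms is clean.
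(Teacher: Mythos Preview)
Your proposal is correct and, for the interior equation, essentially identical to the paper's proof: both compute the heat operator on $w=\phi=\ip{r\mbt}{\nu}$ and then pass to $v$ and $Q$ by the chain rule. The only cosmetic difference is that the paper verifies the vanishing of the cross terms by direct use of the relations~(\ref{eqdrt}), whereas you package the same cancellations as ``$X=r\mbt$ is Killing, so $\ov\n X$ is antisymmetric''; the content is the same.

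At the boundary the arguments diverge slightly. The paper computes $\ip{\nu}{\ov\n_\mu Z}$ explicitly from~(\ref{eqdrt}) (getting $\ip{\nu}{\mbt}\ip{\mu}{\mb{r}}$, since $\mu\perp\mbt$) and then invokes the eigenvalue calculation $A^\Sigma(Z,Z)=-r\ip{\mu}{\mb{r}}$ from Section~\ref{thetorus} to cancel this against the $A^\Sigma(\nu,Z)$ piece produced by Stahl's identity (Lemma~\ref{thankyoustahl}). Your route replaces that special computation by the Killing antisymmetry $\ip{\ov\n_\mu X}{\nu}=-\ip{\ov\n_\nu X}{\mu}$ together with the Gauss formula on $\Sigma$, which turns the right-hand side directly into $\mp A^\Sigma(\nu,X)$; the cancellation then occurs between the two $A^\Sigma(\nu,X)$ terms without ever needing the eigenvalue. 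Your version is a little more conceptual and would work verbatim for any ambient Killing field tangent to $\Sigma$, while the paper's version is entirely explicit about signs by leaning on the preparatory work of Section~\ref{thetorus}. As you note, the only real hazard in your approach is consistent sign bookkeeping between $A$ and $A^\Sigma$; once a convention is fixed the cancellation is clean.
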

\begin{proof}
 We will first calculate the evolution of $w=\ip{r \theta}{\nu}$. 

Using (\ref{eqdrt}) we may immediately see that
\[ \ddt w = -H \ip{\ov \n_\nu (r \mbt)}{\nu} + \ip{r \theta}{\n H} = \ip{r \theta}{\n H}\ \ .\]

Writing $Z=r \mbt$ we have 
\begin{flalign*}
\Delta \ip{\nu}{Z} &=g^{ij} \ip{ \ov \n_i (\ov \n_j \nu) - \ov \n_{\n_i \pard{}{x^j} }\nu}{Z} \\
&\qquad\qquad\qquad+ 2 g^{ij}\ip{\ov \n_i \nu}{\ov \n_j Z } + g^{ij} \ip{\nu}{ \ov \n^2_{ij} Z - H \ov \n_\nu Z } \ \ .
\end{flalign*}

For the first of these terms, take a orthonormal basis $\{\mb{f}_1, \ldots, \mb{f}_n\}$ at a point $\mb{p}\in M$. We extend this to give orthogonal geodesic coordinates at $\mb{p}$. We calculate that at $\mb{p}$,
\begin{flalign*}
 g^{ij} \ip{ \ov \n_i (\ov \n_j \nu) - \ov \n_{\n_i \pard{}{x^j} }\nu}{Z}&=g^{ij}\ip{\mb{f}_j(\mb{f}_i \nu)}{Z}\\
&=g^{ij}\n_jh_{il}g^{lk}\ip{\mb{f}_k}{Z}-g^{ji}h_{il}g^{lk}\ip{h_{jk}\nu}{Z}\\
&=\n_{Z^\top} H -\ip{\nu}{Z}|A|^2
\end{flalign*}
where we used the Weingarten and Codazzi equations. Since the right hand side does not depend on the coordinate system this holds for all $\mb{p}\in M$.

For the second term we have
\[g^{ij}\ip{\ov \n_i \nu}{\ov \n_j Z} = g^{ij}\ip{\ov \n_i \nu}{\ip{\mb{r}}{\pard{}{x^j}} \mbt^\top - \ip{\pard{}{x^j}}{\mbt}\mb{r}^\top} =0\ \ .\]

The final term also vanishes; we may see that 
\begin{flalign*}
 \ov \n^2_{XY} Z &= \ov \n_Y (\ip{X}{\mb{r}} \mbt - \ip{X}{\mbt} \mb{r}) - \ov \n_{\ov \n_X Y } Z\\
&= \frac{\ip{Y}{\mbt}}{r}\left[\ip{X}{\mbt}\mbt - \ip{X}{\mb{r}}\mb{r} +\ip{X}{\mb{r}}\mb{r} - \ip{X}{\mbt} \mbt\right] =0\ \ .
\end{flalign*}
Therefore
\[\ho w = w |A|^2\ \ ,\]
and the evolution equations for $v$ and $Q$ immediately follow.

At the boundary using Lemma \ref{thankyoustahl} and the fact that $\mu \perp \mbt$ we have 
\[ \n_\mu \ip{\nu}{Z} = A(\mu, Z^\top) + \ip{\nu}{\ip{\mb{r}}{\mu}\mbt - \ip{\mu}{\mbt} \mb{r}}=\ip{\nu}{\mbt}\ip{\mu}{\mb{r}}-A^\Sigma(\nu,Z- w \nu) \ \ .\]
Now from calculations in Section \ref{thetorus} we know that $Z$ is an eigenvalue of $A^\Sigma(\cdot, \cdot)$, and using the calculation of this eigenvalue we see
\[\n_\mu \ip{\nu}{Z}=\ip{\nu}{\mbt}\ip{\mu}{\mb{r}} +\frac{1}{r}\ip{\nu}{\mbt}A^\Sigma(Z,Z)+wA^\Sigma(\nu, \nu)=wA^\Sigma(\nu, \nu)\]
and we are done.
\end{proof}

\begin{cor} \label{Hbound}
 While $v$ is bounded we have the estimate $H^2<C_H$ for some constant $C_H>0$ depending only on $M_0$ and $\Sigma$ (and independant of $v$).
\end{cor}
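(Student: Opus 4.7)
The plan is to bound $w := Hv$ directly, by observing that the dangerous $|A|^2$-terms in $\ho H$ and $\ho v$ cancel exactly in the product. Using Lemma~\ref{evols} and Lemma~\ref{evolvQ}, direct computation gives
\[
\ho(Hv)=v\ho H+H\ho v-2\ip{\n H}{\n v}=-\frac{2H}{v}|\n v|^2-2\ip{\n H}{\n v}=-2\ip{\n Q}{\n(Hv)},
\]
so $Hv$ satisfies a linear transport--diffusion equation in the interior of $M$, with no zeroth--order growth. In particular, at any interior critical point $\n(Hv)=0$ one has $\ho(Hv)=0$, which is compatible with the interior hypothesis of Theorem~\ref{WMP}.

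The intrinsic Neumann derivative $\n_\mu(Hv)=vA^\Sigma(\nu,\nu)(1-H)$ is not signed on the non-convex torus, so Theorem~\ref{WMP} does not apply directly to $Hv$ on $M$. I would circumvent this by passing to the graphical picture of Section~\ref{thetorus}: from (\ref{pMCF}) we have $-Hv=\pard u/\pard t$, where $u:\Omega\times[0,T]\to\mbb R$ satisfies the time-\emph{independent} Neumann condition $\gamma\cdot Du=0$ on $\partial\Omega$. Differentiating this in $t$ yields the homogeneous Neumann condition $\gamma\cdot D(\pard u/\pard t)=0$ on $\partial\Omega$, and differentiating the quasilinear equation for $u$ in $t$ produces a linear equation for $w=\pard u/\pard t$ whose principal part is the same $g^{ij}$, uniformly elliptic because $v$ is bounded by hypothesis. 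The standard weak maximum principle for linear parabolic equations on $\Omega$ with homogeneous Neumann data then gives
\[
\sup_\Omega|w(\cdot,t)|\le\sup_\Omega|w(\cdot,0)|=\sup_{M_0}|H_0v_0|,
\]
this being valid even with merely locally bounded lower order coefficients, since at an interior critical point of $w$ the gradient terms vanish.

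Finally, since $\widetilde v=\sqrt{1+r^2|Du|^2}\ge1$ and $r\le r_1$ by Remark~\ref{rbound}, we have $v=\widetilde v/r\ge1/r_1$, hence $|H|\le r_1\sup_{M_0}|H_0v_0|$, giving $H^2<C_H$ with $C_H:=2r_1^2\sup_{M_0}|H_0v_0|^2$, which depends only on $M_0$ and $\Sigma$. The main obstacle being addressed is precisely that the intrinsic Neumann derivative $\n_\mu(Hv)$ is unsigned on the non-convex torus (unlike in the setting of convex boundary manifolds used in \cite{Stahlsecond,AltschulerWu}); the resolution is the passage to the graph picture, where the stationarity of $\partial\Omega$ in time makes the boundary condition for $\pard u/\pard t$ homogeneous by a routine time-differentiation.
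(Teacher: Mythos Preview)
Your interior computation is correct and is essentially the paper's approach: the paper works with $H^2v^2$ rather than $Hv$, but the point in both cases is that the $|A|^2$-contributions from $\ho H$ and $\ho v$ cancel exactly in the product, leaving only gradient terms that vanish at an interior critical point, so Theorem~\ref{WMP} applies on the interior.

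Where you diverge is at the boundary. Your formula $\n_\mu(Hv)=vA^\Sigma(\nu,\nu)(1-H)$ uses Lemma~\ref{thankyoustahl} as printed, but that statement contains a typo: Stahl's identity is $\n_\mu H=H\,A^\Sigma(\nu,\nu)$, not $A^\Sigma(\nu,\nu)$. With the correct formula and $\n_\mu v=-vA^\Sigma(\nu,\nu)$ from Lemma~\ref{evolvQ} one gets $\n_\mu(Hv)=0$, hence also $\n_\mu(H^2v^2)=0$, and Theorem~\ref{WMP} applies directly on $M$ with no boundary obstruction. This is what the paper's proof is (implicitly) invoking when it says ``we may apply our maximum principle''; otherwise that sentence would be just as unjustified as you feared.

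Your detour through the graph picture---differentiating the quasilinear equation~(\ref{pMCF}) and the time-independent Neumann condition $\gamma\cdot Du=0$ in $t$ to obtain a linear parabolic equation for $u_t=-Hv$ with homogeneous Neumann data---is a legitimate alternative and yields the same bound, but it is unnecessary once the boundary derivative is computed correctly. One small caution in that argument: the first-order coefficients of the linearised equation involve $D^2u$, which is not assumed bounded a priori; you are right that this is harmless for the interior maximum principle (gradient terms vanish at a critical point), and on any compact subinterval $[0,T']\subset[0,T)$ the smooth solution supplies bounded coefficients so that the Hopf boundary argument also goes through.
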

\begin{proof}
We calculate 
\begin{flalign*}
 \ho H^2v^2 &=-2v^2|\n H|^2- 8Hv\ip{\n H}{\n v} -6H^2|\n v|^2\\
&=-2\left[v^2|\n H|^2+ Hv\ip{\n H}{\n v}\right] \\
&\qquad\qquad\qquad\qquad\qquad- 6\left[H^2|\n v|^2 +Hv\ip{\n H}{\n v}\right]\ \ .
\end{flalign*}
At a positive stationary point we have $H\n v = -v \n H$, and so the above vanishes and we may apply our maximum principle. Since $\frac{1}{v}<r$ we have, using Remark \ref{rbound}, a uniform bound.
\end{proof}
\begin{remark}
We may attempt to get a positive lower bound on $H$ in a similar way and indeed the evolution equations are amenable. In fact due to the boundary condition, this is not useful. Using the vector field $Z$ as in Lemma \ref{evolvQ} we see
\[ 0=-\int_{\partial M} \ip{\mu}{Z} \dmu_\partial = \int_M \text{div}(Z^\top) \dmu = \int_M \frac{H}{v} \dmu \]
where again we used equation (\ref{eqdrt}). Since by assumption $v$ is initially positive, $H$ cannot be initially positive everywhere, and therefore (for example) weak mean convexity implies we must have a minimal hypersurface -- not much left for the flow! Indeed a corollary of our main theorem is that the only such minimal hypersurface that satisfies our initial conditions is the flat profile.
\end{remark}

\section{Integral estimates}
As a prerequisite to applying the Stampaccia iteration method as in \cite{Huiskengraph} we now give some of the required boundary estimates. In particular, we modify various Lemmas for graphs with boundary in \cite{Gerhardt} to manifolds with boundary. 

For a start, we will require the Michael--Simon--Sobolev inequality from \cite{MSIneq}. While this holds in much more general situations, we will only require $M$ to be smooth embedded $n$-dimensional manifolds in $\bb{R}^{n+1}$. 
\begin{lemma}[The Michael--Simon--Sobolev inequality]
 There exists a constant $C_S>0$ depending only on $n$ such that for any function $f\in C^1(\overline{M})$ such that $f$ has compact support, we have
\begin{flalign*}
 \left( \int_M |f|^\frac{n}{n-1} \dmu \right)^\frac{n-1}{n} \leq C_S \int_M |\n f| + |H| |f| \dmu \ \ .
\end{flalign*}
\end{lemma}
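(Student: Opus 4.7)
The statement is the Michael--Simon--Sobolev inequality, attributed here to \cite{MSIneq}. I would follow the classical Michael--Simon strategy, which splits naturally into two parts: a reduction of the Sobolev bound to an isoperimetric-type inequality on $M$, and a monotonicity-based proof of that isoperimetric inequality.

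By truncation and approximation it suffices to consider non-negative smooth $f$ of compact support. Apply Cavalieri's identity
$$\int_M f^{n/(n-1)}\,\dmu = \frac{n}{n-1}\int_0^\infty s^{1/(n-1)}\,\check{\mu}(\{f>s\})\,ds$$
together with the coarea formula $\int_M |\n f|\,\dmu = \int_0^\infty \mathcal{H}^{n-1}(\{f=s\})\,ds$. Once we establish the isoperimetric-type bound
$$\check{\mu}(U)^{(n-1)/n}\leq C\Bigl(\mathcal{H}^{n-1}(\partial U) + \int_U |H|\,\dmu\Bigr)$$
for smoothly bounded $U \subset M$, applying it to $U=\{f>s\}$ and combining with these two identities through a layer-cake calculation yields the Sobolev inequality up to a dimensional constant.

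The isoperimetric inequality itself I would obtain from a monotonicity formula in the spirit of Allard. For fixed $x_0\in\bb{R}^{n+1}$ test the first variation identity
$$\int_U \text{div}_M Y\,\dmu = -\int_U \ip{Y}{\mb{H}}\,\dmu + \int_{\partial U}\ip{Y}{\eta}\,\dmu_\partial$$
against the ambient vector field $Y(x)=\phi_\rho(|x-x_0|)(x-x_0)$, where $\phi_\rho$ is a smooth cutoff approximating the indicator of $[0,\rho]$. Since $\text{div}_M Y = n\phi_\rho+\ldots$, rearranging produces a differential inequality for the density ratio $I(\rho) = \rho^{-n}\check{\mu}(U\cap B_\rho(x_0))$. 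Integrating this, and using the Euclidean lower density $\lim_{\rho\downarrow 0} I(\rho)\geq \omega_n$ at smooth interior points, yields some radius $\rho_0$, controlled purely by the right-hand side of the claimed isoperimetric inequality, at which $B_{\rho_0}(x_0)\cap U$ captures a definite fraction of $U$. A Vitali-type covering of $U$ by such balls then sums these local estimates into the global bound.

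The principal obstacle is the monotonicity step: the cutoff $\phi_\rho$ must be introduced carefully so that the error coming from its radial derivative, the interior term $\int_U\ip{Y}{\mb{H}}\,\dmu$, and the boundary contribution all absorb --- with the correct powers of $\rho$ --- into either $\mathcal{H}^{n-1}(\partial U)$ or $\int_U |H|\,\dmu$. Beyond that, the Cavalieri/coarea reduction and the covering argument are standard bookkeeping.
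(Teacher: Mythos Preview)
The paper does not prove this lemma at all: it is stated as a known result and attributed to the original reference \cite{MSIneq}, so there is no ``paper's own proof'' to compare against. Your outline is a correct sketch of one standard route to the inequality --- reducing the Sobolev bound to an isoperimetric inequality on $M$ via Minkowski's integral inequality and the coarea formula, and then obtaining that isoperimetric inequality from a first-variation monotonicity argument plus a covering --- and nothing more than a citation was required here.
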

We also need such an inequality not just on functions of compact closure, but functions that may be non-zero at the boundary $\partial M$.
\begin{lemma}
 For any compact manifold $M$ with boundary $\partial M$ and for any function $f\in C^1(\overline{M})$ we have
\begin{flalign*}
 \left( \int_M |f|^\frac{n}{n-1} \dmu \right)^\frac{n-1}{n} \leq C_S\left[ \int_M |\n f| + |H| |f| \dmu + \int_{\partial M} |f| \dmu_\partial \right]
\end{flalign*}
where the constant $C_S$ depends only on $n$.
\end{lemma}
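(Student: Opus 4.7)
The plan is to reduce to the compactly supported Michael--Simon--Sobolev inequality of the previous lemma via a boundary cutoff. For small $\epsilon > 0$, let $d$ denote intrinsic distance to $\partial M$ on $M$, and define the Lipschitz cutoff $\phi_\epsilon = \min(d/\epsilon,1)$, so that $\phi_\epsilon$ vanishes on $\partial M$, equals $1$ outside the tubular neighborhood $U_\epsilon = \{d < \epsilon\}$, and satisfies $|\n \phi_\epsilon| \leq 1/\epsilon$ almost everywhere, with $\n \phi_\epsilon$ supported in $U_\epsilon$.

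First I would apply the previous lemma to $\phi_\epsilon f$, which is compactly supported in the interior of $M$ (the inequality extends from $C^1$ to Lipschitz test functions by a standard density argument). This gives
\begin{equation*}
\left(\int_M (\phi_\epsilon |f|)^{n/(n-1)} \dmu\right)^{(n-1)/n} \leq C_S \int_M |\n(\phi_\epsilon f)| + |H|\phi_\epsilon|f|\, \dmu,
\end{equation*}
and the product rule $|\n(\phi_\epsilon f)| \leq \phi_\epsilon |\n f| + |f||\n \phi_\epsilon|$ bounds the right-hand side above by
\begin{equation*}
C_S \int_M |\n f| + |H||f|\, \dmu \ + \ \frac{C_S}{\epsilon}\int_{U_\epsilon} |f|\, \dmu.
\end{equation*}

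The key remaining step is to show that the boundary-layer term converges to the desired boundary integral as $\epsilon \to 0$. Since $\partial M$ is smooth and embedded in $M$, there is a tubular collar on which $d$ is smooth with $|\n d| = 1$, and the coarea formula yields
\begin{equation*}
\frac{1}{\epsilon}\int_{U_\epsilon} |f|\, \dmu = \frac{1}{\epsilon}\int_0^\epsilon \int_{\{d=s\}} |f|\, d\mathcal{H}^{n-1}\, ds.
\end{equation*}
Continuity of $f$ combined with the smooth collar structure implies $\int_{\{d=s\}}|f|\, d\mathcal{H}^{n-1} \to \int_{\partial M}|f|\, \dmu_\partial$ as $s \to 0^+$, so by Lebesgue differentiation $(1/\epsilon)\int_{U_\epsilon}|f|\, \dmu \to \int_{\partial M}|f|\, \dmu_\partial$. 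Meanwhile $\phi_\epsilon f \to f$ pointwise with $|\phi_\epsilon f| \leq |f|$, so dominated convergence sends the left-hand side to $(\int_M |f|^{n/(n-1)} \dmu)^{(n-1)/n}$. Passing $\epsilon \to 0$ then gives the desired inequality, with $C_S$ possibly replaced by a universal multiple (which may be absorbed into the constant).

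The main obstacle will be maintaining a constant depending only on $n$ throughout the argument: the cutoff $\phi_\epsilon$ is intrinsic to $M$, and one must verify that the tubular-neighbourhood coarea estimate does not introduce dependence on the geometry of $\partial M$ or $M$ beyond what was already present in the interior inequality. This works out because only a Lipschitz cutoff with the explicit gradient bound $1/\epsilon$ enters, and the collar-neighbourhood limit produces exactly the boundary integral with no geometric prefactor.
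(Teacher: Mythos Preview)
Your proof is correct and follows essentially the same approach as the paper: both introduce the Lipschitz cutoff $\min(d/\epsilon,1)$ (the paper writes it as $\min\{1,kd\}$), apply the compactly supported Michael--Simon inequality to the product, and then pass to the limit to produce the boundary integral. The only cosmetic differences are that the paper mollifies the cutoff to stay in $C^1$ rather than invoking density, and computes the limiting boundary-layer integral via an explicit geodesic collar parametrisation instead of the coarea formula.
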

\begin{proof}
This is as in \cite[Lemma 1.1]{Gerhardt}. Let $d: D \rightarrow \bb{R}$ be the function giving the minimum distance \emph{along the manifold} to the boundary. This is smooth close enough to the boundary. We define for $k$ large enough $\widetilde{\eta}_k = \min\{ 1, k d\}$, and let $\eta_k$ be a $C^1$ smoothing of this. We consider the sequence $f_k=\eta_k f$ for $k \in \bb{N}$. Since $\check{\mu}(\{ x | f(x) \neq f_i\})\rightarrow 0 $ as $i \rightarrow \infty$ we have that
\[\left( \int_M |f_k|^\frac{n}{n-1} \dmu \right)^\frac{n-1}{n} \rightarrow \left( \int_M |f|^\frac{n}{n-1} \dmu \right)^\frac{n-1}{n}, \ \ \int_M  |H| |f_k| \dmu \rightarrow \int_M  |H| |f| \dmu \ \ .\]
We also see,
\begin{flalign*}
\int_M |\n f_k| \dmu &\leq \int_M |\n f|\eta_k \dmu +\int_M |f| |D \eta_k| \dmu\ \ .
\end{flalign*}
The first term of the above may be estimated similarly to the other terms. For the final term we choose a special parametrisation of the collar. We parametrise by $\mb{C}: \partial M \times [0, \epsilon)\rightarrow \bb{R}^{n+1}$ for $\epsilon$ small enough by setting  $\mb{C}(x,\epsilon)$ to be the point obtained by starting at $x\in \partial M$ and moving distance $\epsilon$ down the geodesic starting at $x$ with direction $-\mu$. Therefore $\pard{}{x^n} = \n d$ and the metric induced by $\mb{C}$ has $g_{in} = \delta_{in}$. Therefore for $k$ large enough
\begin{flalign*}
\int_M |f| |D \eta_k| \dmu &\leq \int_{\{x\in M| d(x)\leq \frac{1}{k}\}} k |f| \dmu\\
&=k\int_0^\frac{1}{k} \int_{\partial M^n} |f|\sqrt{\det(g_{ij}(x,s))} \dmu_\partial ds\\
&\rightarrow \int_{\partial M^n} |f| \sqrt{\det(g_{ij}(x,0))} \dmu_\partial= \int_{\partial M^n} |f| \dmu_\partial
\end{flalign*}
as $k\rightarrow \infty$. 
\end{proof}

Clearly we need to estimate boundary integrals, and we now give one way of doing so, based on \cite[Lemma 1.4]{Gerhardt}.
\begin{lemma} \label{Mbndryint}
 For $M$ a compact manifold with boundary, then  for all $f\in W^{1, \infty}(M)$ we have
\[ \int_{\partial M} |f| \dmu_\partial \leq C_\Sigma \int_M |\n f| +(|H|+1)|f| \dmu\]
where the constant $C_\Sigma>0$ depends only on $\Sigma$.
\end{lemma}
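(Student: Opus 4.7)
The strategy is to reduce the boundary integral to an interior one via the divergence theorem, using a carefully chosen ambient vector field $X$ on $\bb{R}^{n+1}$. Since $\Sigma$ is a fixed compact smooth hypersurface, its outward unit normal $\mu$ extends smoothly to a tubular neighborhood of $\Sigma$ (for instance, along the normal exponential map, or as $\ov \n$ of the signed distance function) and, after multiplication by a cutoff, to a global smooth vector field $X$ on $\bb{R}^{n+1}$ with $X|_\Sigma = \mu$ and $|X| + |\ov \n X| \leq C_\Sigma$, where $C_\Sigma$ depends only on $\Sigma$.

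The key observation is that along $\partial M$, the Neumann condition $\ip{\nu}{\mu}=0$ forces $\mu$ to be tangent to $M$, and since $T_\mb{p}\partial M \subset T_\mb{p}\Sigma$ while $\mu \perp T_\mb{p}\Sigma$, the vector $\mu$ is orthogonal to $\partial M$ inside $T_\mb{p} M$. As $M$ lies inside the torus, this means $\mu$ coincides with the outward conormal $\hat \mu$ of $\partial M$ in $M$; hence $\ip{X}{\hat \mu} = 1$ on $\partial M$. The plan is then to apply the standard divergence identity
\[ \int_M \text{div}_M(Y) \dmu = \int_{\partial M} \ip{Y}{\hat \mu} \dmu_\partial - \int_M H \ip{Y}{\nu} \dmu \]
to the vector field $Y = |f| X$, first approximating $|f|$ by $\sqrt{f^2 + \varepsilon^2} - \varepsilon$ and then passing to the limit (this is legitimate since $f \in W^{1,\infty}$).

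Using the decomposition $\text{div}_M(|f|X) = |f|\, \text{div}_M X + \ip{X^\top}{\n |f|}$, this identity rearranges to
\[ \int_{\partial M} |f| \dmu_\partial = \int_M \left( |f|\, \text{div}_M X + \ip{X^\top}{\n|f|} + H \ip{X}{\nu}\, |f| \right) \dmu . \]
Since $\text{div}_M X$ is a contraction of $\ov \n X$ against the metric on $M$, we have $|\text{div}_M X| \leq n |\ov \n X| \leq C_\Sigma$, while $|X^\top| \leq 1$ and $|\ip{X}{\nu}| \leq 1$. Combined with the a.e.\ bound $|\n|f|| \leq |\n f|$, each term is controlled by the right-hand side of the claimed estimate, after enlarging $C_\Sigma$.

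The only real subtlety is ensuring the extension constants genuinely depend only on $\Sigma$ and not on $M$; this is handled once and for all by the compactness of $\Sigma$, so the construction of $X$ is a one-time affair independent of the flow. The rest is bookkeeping with the divergence theorem and an absorption of constants.
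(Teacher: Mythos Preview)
Your proof is correct and follows essentially the same approach as the paper. The paper builds a scalar function $\ov\phi=\phi(\ov d)$ from the signed distance $\ov d$ to $\Sigma$ (so that $\ov\nabla\,\ov\phi|_\Sigma=\mu$) and applies the divergence theorem on $M$ to $f\,\nabla\ov\phi$; your extension $X$ of $\mu$ plays exactly the role of $\ov\nabla\,\ov\phi$, and your first-variation identity with $Y=|f|X$ unpacks to the same terms as the paper's computation of $\Delta\ov\phi$. The only cosmetic slip is writing $|X^\top|\le 1$ and $|\ip{X}{\nu}|\le 1$ after only asserting $|X|\le C_\Sigma$; either arrange $|X|\le 1$ in the construction or replace those $1$'s by $C_\Sigma$.
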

\begin{proof}
 This is essentially just divergence theorem. We now use $\ov d$, the minimum distance to $\Sigma$ in $\bb{R}^{n+1}$ and note that at $\Sigma$, $\ov \n \,\ov d = -\mu$. We take a smooth function $\phi: \bb{R} \rightarrow \bb{R}$ such that $\phi'(0)=-1$ and $\phi(x)=0$ for $x>R$ where $R$ is less than the minimum focal distance of $\Sigma$. We define $\ov \phi = \phi(\ov d)$ -- a smooth function on $\bb{R}^{n+1}$. Then 
\[\Delta \ov{\phi} =g^{ij} \ov \n^2 \ov \phi \left(\pard{\mb{F}}{x^i}\right)\left(\pard{\mb{F}}{x^j}\right) - H \ip{\nu}{\ov \n \ov \phi} \leq C_1(1 + |H|)\ \ .\]
for some $C_1>0$ depending on the derivatives of $\ov \phi$ and so
\begin{flalign*}
 \int_{\partial M} f \dmu_\partial &= \int_M \text{div}(f\n \ov{\phi})\dmu \\
&= \int_M \ip{\n f}{\n \ov{\phi}} + f \Delta \ov{\phi} \dmu \\
&\leq C_2 \int_M |\n f| + f(|H|+1) \dmu\ \ ,
\end{flalign*}
for some $C_2>0$ depending on the derivatives of $\ov \phi$.
\end{proof}
\begin{cor}
\label{fullsobolev}
 For all $f\in C^1(\overline{M})$ there exists a constant $\ov{C}_S$ depending on $n$ and $\Sigma$ such that
\[ \left( \int_M |f|^\frac{n}{n-1} \dmu \right)^\frac{n-1}{n} \leq \ov{C}_S \int_M |\n f| + (|H|+1) |f| \dmu \]
\end{cor}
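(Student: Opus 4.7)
The plan is simply to combine the two preceding results: the Michael--Simon--Sobolev inequality with boundary term (the second Lemma in this section) controls the $L^{n/(n-1)}$ norm of $f$ by an interior term plus a boundary integral of $|f|$, and Lemma \ref{Mbndryint} then estimates precisely this boundary integral in terms of an interior integral with the same integrand (up to constants) as the one already present.

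Concretely, I would first apply the boundary-augmented Michael--Simon--Sobolev inequality to $|f|$ to get
\[
\left( \int_M |f|^{n/(n-1)} \dmu \right)^{(n-1)/n} \leq C_S \int_M |\n f| + |H||f| \dmu + C_S \int_{\partial M} |f| \dmu_\partial.
\]
Then I would apply Lemma \ref{Mbndryint} (to $|f|$, noting that $|f| \in W^{1,\infty}(M)$ if $f \in C^1(\overline{M})$, with $|\n |f|| \leq |\n f|$ almost everywhere) to bound the boundary integral by $C_\Sigma \int_M |\n f| + (|H|+1)|f|\dmu$. Substituting gives the desired estimate with $\ov{C}_S = C_S(1 + C_\Sigma)$, which depends only on $n$ (through $C_S$) and on $\Sigma$ (through $C_\Sigma$).

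There is essentially no obstacle here, as both ingredients are already established; the only mild subtlety is that Lemma \ref{Mbndryint} is stated for $W^{1,\infty}$ functions while the Michael--Simon--Sobolev inequality above was stated for $C^1$ functions, but $|f|$ is Lipschitz whenever $f \in C^1(\overline{M})$, so the application is legitimate. This is a purely formal combination, which is why the authors state it as a corollary rather than a lemma.
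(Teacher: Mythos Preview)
Your proposal is correct and is exactly the intended argument: the paper states this as a corollary with no proof precisely because it follows immediately by combining the boundary-augmented Michael--Simon--Sobolev inequality with Lemma~\ref{Mbndryint}, just as you describe.
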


Due to the boundary condition we have the following:
\begin{lemma}
\label{dintdt}
Suppose $f: M^n \times[0,T)\rightarrow \bb{R}$ is once differentiable in time such that $\ddt{f}, f \in L^1(M_t)$. Then the following holds for $t>0$ and $\beta=0$:
 \begin{equation}\frac{d}{dt} \int_{M_t} f \dmu = \int_{M_t} \frac{d f}{dt} - H^2 f \dmu  
  \label{ddtint}
 \end{equation}
\end{lemma}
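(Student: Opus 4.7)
The plan is to pull back the integral to the fixed parameter manifold $M^n$ and differentiate under the integral sign. In a coordinate chart on $M^n$ we have $\dmu = \sqrt{\det g_{ij}(x,t)}\,dx$, and the crucial point is that the domain of integration (the parameter space $M^n$) does not move with time; only the integrand and the volume element do. Thus no boundary term arises from a moving domain, which is why the formula has the simple form stated.

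The first step is to compute the evolution of the volume form. From $\ddt{} \det g_{ij} = \det g_{ij} \cdot g^{ij}\ddt{g_{ij}}$ together with equation (\ref{dgijdt}), namely $\ddt{g_{ij}} = -2H h_{ij}$, one obtains
\[
g^{ij}\ddt{g_{ij}} = -2H\, g^{ij} h_{ij} = -2H^{2},
\]
and hence $\ddt{}\sqrt{\det g_{ij}} = -H^{2}\sqrt{\det g_{ij}}$. The product rule then gives, at least formally,
\[
\frac{d}{dt}\int_{M^n} f\sqrt{\det g}\,dx = \int_{M^n}\left(\ddt{f} - H^{2} f\right)\sqrt{\det g}\,dx = \int_{M_t}\left(\ddt{f} - H^{2} f\right)\dmu.
\]

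The main obstacle is justifying the interchange of $\frac{d}{dt}$ and the integral; this needs a dominated-convergence argument. Because the flow $\mathbf{F}$ is smooth on the open time interval $[0,T)$, on any compact subinterval $[t_0,t_1]\subset(0,T)$ the quantities $H^{2}$ and $\sqrt{\det g_{ij}}$ are uniformly bounded on $M^n\times[t_0,t_1]$, so the hypothesis $\ddt{f}, f\in L^{1}(M_t)$ supplies a $t$-uniform $L^{1}$ dominating function on this subinterval. Since time differentiation is a local operation in $t$, this is sufficient to justify differentiation under the integral, and combining this with the evolution of the volume form gives the claim.
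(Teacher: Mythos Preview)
Your proof is correct and follows essentially the same route as the paper's: pull the integral back to the fixed parameter manifold $M^n$, then use the metric evolution $\ddt{g_{ij}}=-2Hh_{ij}$ from Lemma~\ref{evols} to obtain $\ddt{}\sqrt{\det g}=-H^2\sqrt{\det g}$. The paper states this in two lines, whereas you spell out the computation of the volume-form evolution and add a dominated-convergence justification for differentiating under the integral; these are welcome details but not a different argument.
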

\begin{proof}
The perpendicular boundary condition implies that the manifold does not flow out through the boundary. Therefore, we know that in the parametrisation defined by $\mb{F}$ in (\ref{MCF}) over the stationary domain $M^n$ that
\[\int_{M_t} f \dmu = \int_{M^n} f \sqrt{\det(g_{ij}(x,t)) }dx\ \ .\]
Now the equation follows from (\ref{dgijdt}). 
\end{proof}

\begin{remark}\label{L2H}
Integrating equation (\ref{dintdt}) for $f=1$ with respect to time and rearranging we see that
\[\int_0^T \int_{M_t} H^2 \dmu dt \leq |M_0|\ \ ,\]
that is we have a parabolic $L^2$ estimate on $H$ that does not depend on the time interval. 
\end{remark}

We will also require the following well known Lemma, which serves to streamline the iteration argument of the next chapter:
\begin{lemma}\label{Stampacciait}
 Suppose $\phi:(k_0, \infty) \rightarrow \bb{R}$ is a non--negative non--increasing function such that for all $h>k\geq k_0$ then
\[ \phi(h) \leq \frac{C}{(h-k)^\alpha} (\phi(k))^\beta\]
where $C, \alpha$ and $\beta$ are positive constants. Then if $\beta>1$ then $\phi(k_0+d)=0$ for
\[ d^\alpha = C [\phi(k_0)]^{\beta-1} 2^{\alpha\frac{\beta}{\beta-1}}\ \ .\]
\end{lemma}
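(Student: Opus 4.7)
The plan is a classical Stampacchia dyadic iteration: one builds a geometric sequence of thresholds $k_n$ increasing to $k_0+d$, and uses the hypothesis to show that $\phi(k_n)$ decays geometrically fast enough that the limit is $0$.

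First I would define the sequence
\[ k_n = k_0 + d\left(1 - \frac{1}{2^n}\right), \qquad n\ge 0, \]
so that $k_0$ is as given, $k_n$ is strictly increasing, $k_n \nearrow k_0 + d$, and $k_{n+1}-k_n = d/2^{n+1}$. Applying the hypothesis with $h = k_{n+1}$ and $k = k_n$ (both $\ge k_0$) gives the recursion
\[ \phi(k_{n+1}) \le \frac{C\, 2^{\alpha(n+1)}}{d^\alpha}\, \phi(k_n)^\beta. \]

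Next I would prove by induction on $n$ that $\phi(k_n) \le \phi(k_0)\, 2^{-\gamma n}$, where the decay rate $\gamma = \alpha/(\beta-1)$ is forced by the scaling of the recursion. The base case $n=0$ is trivial. For the inductive step, substitute the hypothesis into the recursion: the exponent of $2$ that appears on the right is $\alpha(n+1) - \gamma n\beta$, and one needs this, combined with the factor $C\phi(k_0)^{\beta-1}/d^\alpha$, to be bounded by $2^{-\gamma(n+1)}$. With the choice $\gamma = \alpha/(\beta-1)$ the $n$-dependent exponents $\alpha n$ and $\gamma n\beta - \gamma n = \gamma n (\beta-1) = \alpha n$ cancel, leaving a constant exponent $\alpha + \gamma = \alpha\beta/(\beta-1)$. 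The induction then closes provided
\[ \frac{C\phi(k_0)^{\beta-1}\, 2^{\alpha\beta/(\beta-1)}}{d^\alpha} \le 1, \]
which is exactly the equality in the statement of the lemma; this is how the stated value of $d$ is calibrated.

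Finally, since $\phi$ is non-increasing and $k_n \le k_0 + d$ for all $n$, we have $0 \le \phi(k_0+d) \le \phi(k_n) \le \phi(k_0) 2^{-\gamma n} \to 0$, giving $\phi(k_0+d)=0$. The argument is essentially algebraic, and there is no real obstacle beyond pinning down the correct exponent $\gamma$; the hypothesis $\beta>1$ is used precisely in dividing by $\beta-1$ to define $\gamma$, and the positivity of $\gamma$ is what makes the geometric decay work.
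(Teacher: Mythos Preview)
Your argument is correct and is exactly the standard dyadic Stampacchia iteration; the choice $\gamma=\alpha/(\beta-1)$ and the calibration of $d$ are precisely what make the induction close. The paper does not give its own proof of this lemma but simply cites \cite[Lemma~4.1~i)]{Stampacchia}, so your proposal supplies the details the paper omits.
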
 
\begin{proof}
See \cite[Lemma 4.1 i)]{Stampacchia}, for example.
\end{proof}

\section{The gradient estimate via iteration}

Here we will give a bound on the gradient $Q=\log(-\ip{\nu}{ r \mbt})$ via integral estimates. We define $Q_k=(Q-k)_+$ and $A(k) =\{ x\in M | Q_k>0 \}$ and we aim to get suitable estimates on the quantity
\[ \|A(k)\|= \int_0^T\int_{A(k)} \dmu \, dt\ \ ,\]
ultimately showing that this is zero for all sufficiently large $k$. We begin with some $L^p$ estimates on $Q_k$.

\begin{lemma}
\label{Qkpestimate}
There exsits a $k_1>0$ such that $k>k_1$ and even $p$ 
\[ \int_0^T\int_M Q_k^p \dmu\,dt \leq C_Q(p) \|A(k)\|\]
where $C_Q>0$ depends on $p, n, \Sigma, M_0$ and $k_1$.
\end{lemma}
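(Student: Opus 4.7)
The plan is to test the evolution equation for $Q$ against $pQ_k^{p-1}$, integrate by parts, and bootstrap on even $p$. Combining $\ho Q = -|A|^2 - |\n Q|^2$ from Lemma \ref{evolvQ} with the transport formula of Lemma \ref{dintdt}, integrating by parts, and using $\n Q_k = \chi_{A(k)} \n Q$, one obtains
\[
\frac{d}{dt}\int_{M_t} Q_k^p \, \dmu + p(p-1)\int_{M_t} Q_k^{p-2} |\n Q_k|^2 \, \dmu \leq p\int_{\partial M_t} Q_k^{p-1} \n_\mu Q \, \dmu_\partial,
\]
after discarding the nonnegative terms $p\int Q_k^{p-1}(|A|^2 + |\n Q|^2)\,\dmu$ and $\int H^2 Q_k^p\,\dmu$ from the left.

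The boundary term is where the real work lies. Since $\n_\mu Q = -A^\Sigma(\nu,\nu)$ by Lemma \ref{evolvQ} and $A^\Sigma$ is bounded on the compact torus $\Sigma$, the integrand is controlled by $c_\Sigma Q_k^{p-1}$. Applying Lemma \ref{Mbndryint} to $Q_k^{p-1}$, together with the uniform estimate $|H| \leq \sqrt{C_H}$ from Corollary \ref{Hbound}, converts this into an interior integral of the form $\int (p-1)Q_k^{p-2}|\n Q_k| + (\sqrt{C_H}+1)Q_k^{p-1}\,\dmu$. A Young's inequality applied to the gradient factor, noting that $|\n Q_k|$ is supported in $A(k)$, absorbs half of it into $\tfrac{p(p-1)}{2}\int Q_k^{p-2}|\n Q_k|^2\,\dmu$ on the left and leaves a term supported in $A(k)$. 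The outcome is
\[
\frac{d}{dt}\int_{M_t} Q_k^p \, \dmu \leq C_1(p) \int_{M_t} \chi_{A(k)} Q_k^{p-2} \, \dmu + C_2(p) \int_{M_t} Q_k^{p-1} \, \dmu.
\]

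To close the argument, set $k_1 = \sup_{M_0} Q$, so that $Q_k(\cdot,0) \equiv 0$ for $k > k_1$. Estimating $Q_k^{p-1} \leq \chi_{A(k)} Q_k^{p-2} + Q_k^p$ (by splitting $\{Q_k \leq 1\} \cup \{Q_k > 1\}$) and applying Gronwall to the resulting linear differential inequality yields
\[
\sup_{t \in [0,T]} \int_{M_t} Q_k^p \, \dmu \leq C_3(p) \int_0^T \int_{M_s} \chi_{A(k)} Q_k^{p-2} \, \dmu \, ds,
\]
and the claim then follows by induction on even $p$: for $p=2$ the right-hand integrand is just $\chi_{A(k)}$ and integrates to $\|A(k)\|$, and for $p \geq 4$ one uses $\chi_{A(k)}Q_k^{p-2} = Q_k^{p-2}$ together with the inductive hypothesis at $p-2$. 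The principal obstacle is arranging the boundary absorption, which is precisely where Corollary \ref{Hbound} is required in order to handle the $|H|Q_k^{p-1}$ term produced by Lemma \ref{Mbndryint}; everything beyond that is routine, although the constants $C_Q(p)$ picked up along the way also depend on $T$ through the Gronwall step.
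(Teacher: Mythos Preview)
Your derivation of the differential inequality is fine, and your scheme of absorbing the boundary term via Lemma~\ref{Mbndryint}, Corollary~\ref{Hbound}, and Young's inequality is exactly what the paper does. The problem is the endgame: you acknowledge in your last sentence that the Gronwall step makes $C_Q(p)$ depend on $T$ (through a factor of the form $e^{C_2 T}$), and that is not a cosmetic defect. The lemma explicitly asserts that $C_Q$ depends only on $p,n,\Sigma,M_0,k_1$; this $T$-independence is the entire point, since it is what, together with the $T$-independent bound $\|A(k)\|\leq C$ of Proposition~\ref{thefinalestimate}, feeds into the Stampacchia iteration of Theorem~\ref{generalgradest} to produce a gradient bound \emph{uniform in time}. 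With an $e^{CT}$ constant, that uniformity is lost and the long-time existence and convergence argument of Theorem~\ref{PStorustheorem} no longer closes.

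The idea you are missing is to test not with $Q_k^p$ but with the weighted quantity $e^{\lambda\theta}Q_k^p$. The weight is harmless at the boundary since $\n_\mu\theta=0$ (Lemma~\ref{evolu}) and harmless for the boundedness since $\theta$ is bounded (Corollary~\ref{thetabound}). Its role is on the interior: on $A(k)$ one has $|\n\theta|^2=\tfrac{1}{r^2}(1-w^2/r^2)\geq c>0$ once $k$ is large, and for $\lambda$ sufficiently large this forces $\ho e^{\lambda\theta}\leq -3C_3^\theta<0$ there. Running your same computation with this weight, the strictly negative zeroth-order contribution survives the boundary absorption and yields
\[
\frac{d}{dt}\int_M e^{\lambda\theta}Q_k^p\,\dmu \;\leq\; \int_M Q_k^{p-2}\bigl[C_n - C_3^\theta Q_k^2\bigr]\,\dmu,
\]
with the \emph{good} sign on the top-order term. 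Integrating in time (the initial integral vanishes for $k>k_1$) then gives $\int_0^T\!\int_M Q_k^p \leq \widetilde C\int_0^T\!\int_M Q_k^{p-2}$ with $\widetilde C$ independent of $T$, and your induction on even $p$ finishes the job. In short: the absorption-and-induction architecture you propose is correct, but you must trade Gronwall for the coercive weight $e^{\lambda\theta}$ to get the sign that makes the constant uniform in $T$.
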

\begin{proof}
Using Lemma \ref{evolu} and writing $w=\ip{r\mbt}{\nu}$ we have that 
\begin{flalign*}
 \ho e^{\lambda \theta} &= \lambda e^{\lambda \theta}\left[ -\frac{2}{r^2}\ip{\nu}{\mbt}\ip{\nu}{\mb{r}} - \lambda |\n \theta|^2 \right]\\
&\leq \lambda e^{\lambda \theta} \left[ \frac{w}{r^3} C^\theta_1 -\frac{\lambda}{r^2} \left(1-\frac{w^2}{r^2}\right) \right]
\end{flalign*}
for $C_1^\theta>0$. At the boundary this function has zero derivative in the $\mu$ direction (from Lemma \ref{evolu}). On $A(k)$, we may estimate $ w<e^{-k}$ and so recalling Remark \ref{rbound} and writing $|\n \theta|^2 = \frac{1}{r^2} - \frac{w^2}{r^4}$, we may get a positive lower bound on $|\n \theta|$ for $k$ sufficiently large. Therefore, we may choose a large enough $\lambda, k_0>0$ so that for all $k>k_0$ on $A(k)$ 
\begin{flalign*}
 \ho e^{\lambda \theta} &\leq - C^\theta_2 |\n \theta |^2\\
&\leq -3C_3^\theta
\end{flalign*}
holds, for $C^\theta_2,C^\theta_3>0$ depending on $\lambda$, where we used our bounds on $\theta$ from Corollary \ref{thetabound}.
We now agree to write $C_n$ for any bounded positive constant depending only on $M_0$, $p$, $\Sigma$, $k_0$ and $n$. Using Lemma \ref{evolvQ} we calculate that on $A(k)$ for $k>k_0$ and $p>2$,
\begin{flalign*}
\ho  e^{\lambda \theta} Q_k^p &\leq Q_k^{p-2} [ -C_2^\theta Q_k^2|\n \theta|^2 - p e^{\lambda \theta}  Q_k|\n Q|^2 \\
&\qquad\qquad\qquad\qquad\quad- e^{\lambda \theta} p(p-1)|\n Q|^2 + C_n pQ_k|\n Q||\n \theta|]\\
&\leq Q_k^{p-2}\left[C_n Q_k-C_2^\theta Q_k^2|\n \theta|^2 - C_nQ_k|\n Q|^2- C_n|\n Q|^2 \right]\\
&\leq C_n Q_k^{p-2} -2C_3^\theta Q_k^p -C_nQ_k^{p-1} |\n Q|^2 -C_n Q_k^{p-2}|\n Q|^2\ \ ,
\end{flalign*}
where we used the lower bound on $|\n\theta|$ and Young's inequality of the form $ab = \frac{\epsilon a^2}{2} + \frac{b^2}{2\epsilon}$ repeatedly. We may now use Lemma \ref{dintdt} and divergence theorem to see that
\begin{flalign*}
 \frac{d}{dt}\int_M e^{\lambda \theta}Q_k^p \dmu &\leq \int_M \ho e^{\lambda \theta} Q_k^p - H^2 e^{\lambda \theta} Q_k^p\dmu \\
&\qquad\qquad\qquad\qquad+ \int_{\partial M} p C_n C_\Sigma Q_k^{p-1} \dmu_\partial\ \ .
\end{flalign*}
Estimating as above and using Lemma \ref{Mbndryint} and Corollary \ref{Hbound},
\begin{flalign*}
 \frac{d}{dt}&\int_M e^{\lambda \theta}Q_k^p \dmu\\
&\leq \int_M Q_k^{p-2}\left[C_n -2C_3^\theta Q_k^2 -C_nQ_k |\n Q|^2 -C_n |\n Q|^2 +C_nQ_k + C_n|\n Q| \right] \dmu\\
&\leq \int_M Q_k^{p-2}\left[C_n - C_3^\theta Q_k^2 \right] \dmu\ \ .
\end{flalign*}
Hence choosing $k>k_1=\max\{ k_0, \underset{M_0} \sup Q +1\}$ we may integrate to get
\begin{flalign*}
 \int_0^T \int_{A(k)} Q_k^p \dmu\, dt \leq \widetilde{C} \int_0^T \int_{A(k)} Q_k^{p-2} \dmu \,dt\ \ .
\end{flalign*}

It is easily verified that the above argument also holds if $p=2$. Therefore, by induction we may estimate for $p$ even and $k>k_1$
\[\int_0^T\int_M Q_k^p \dmu\,dt \leq C_Q(p) \|A(k)\|\ \ .\]
\end{proof}

In addition to the above we also need an estimate on $\|A(k)\|$ which does not depend on $T$. This comes about by using Remark \ref{L2H} to estimate the highest order terms in a Laplacian after careful use of the boundary condition:

\begin{prop}\label{thefinalestimate}
 There exists a $k_2>0$ such that for all $k>k_2$ there exists a constant $C$ depending only on $M_0, \Sigma$ and $n$ such that
\[\|A(k)\|\leq C\]
\end{prop}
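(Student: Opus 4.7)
The plan is to derive a $T$-independent bound on the spacetime measure $\|A(k)\|$ for sufficiently large $k$ by an energy argument that leverages the parabolic $L^2$ bound on $H$ from Remark \ref{L2H}, which is the only $T$-independent estimate on geometric quantities we have so far.

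First, I would test the evolution equation $\ho Q = -|A|^2 - |\nabla Q|^2$ from Lemma \ref{evolvQ} against $Q_k$. Applying Lemma \ref{dintdt} and integration by parts, with the boundary term handled via the identity $\nabla_\mu Q = -A^\Sigma(\nu,\nu)$ from Lemma \ref{thankyoustahl} (bounded since $\Sigma$ is smooth and compact), yields a differential inequality of the form
\begin{equation*}
\frac{d}{dt}\int Q_k\,d\check{\mu} + \int_{A(k)}(|A|^2+|\nabla Q|^2)\,d\check{\mu} + \int H^2 Q_k\,d\check{\mu} \leq C_1\,|\partial M\cap A(k)|_t.
\end{equation*}
Integrating in time and using $Q_k(0)\equiv 0$ (since $k>k_1\geq\sup_{M_0}Q$) transfers this into spacetime bounds on $\int_0^T\int_{A(k)}|\nabla Q|^2\,dt$ and $\int_0^T\int H^2 Q_k\,dt$ in terms of $\int_0^T|\partial M\cap A(k)|_t\,dt$.

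Second, I would convert this spacetime boundary measure to an interior quantity via Lemma \ref{Mbndryint} applied to a Lipschitz cutoff (for instance $\min(Q_{k-1},1)$, which equals $1$ on $A(k)$ and vanishes off $A(k-1)$). This introduces interior integrals of $|\nabla Q|\chi_{A(k-1)}$ and of $(|H|+1)\chi_{A(k-1)}$. Cauchy--Schwarz, Lemma \ref{Qkpestimate} applied at level $k-1$, and crucially the $T$-independent bound $\int_0^T\int H^2\,d\check{\mu}\,dt\leq|M_0|$ from Remark \ref{L2H}, together bound this spacetime boundary measure by a $T$-independent multiple of $\|A(k-1)\|+\|A(k-1)\|^{1/2}$.

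Finally, combining the resulting interior bound on $\int_0^T\int_{A(k)}|\nabla Q|^2\,dt$ with the Michael--Simon--Sobolev inequality (Corollary \ref{fullsobolev}) applied to $Q_k$, then re-bounding via Cauchy--Schwarz, Lemma \ref{Qkpestimate}, and Remark \ref{L2H}, yields a Stampacchia-type comparison between levels of the form $\|A(h)\|\leq C(h-k)^{-\alpha}(\|A(k)\|+\|A(k)\|^{1/2})$ in which the sublinear $\sqrt{\|A(k)\|}$ term comes directly from Remark \ref{L2H}. Iterating this comparison (starting from the trivially finite $\|A(k_1)\|$) provides the uniform-in-$T$ bound $\|A(k_2)\|\leq C$ for some $k_2$ depending only on $M_0$, $\Sigma$ and $n$.

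The main obstacle will be avoiding a circular dependence and achieving genuine $T$-independence. Lemma \ref{Qkpestimate} on its own only produces estimates linear in $\|A(k)\|$, and any pure iteration of a linear comparison introduces $T$-dependent constants. It is precisely the $T$-independent $L^2$ bound on $H$ from Remark \ref{L2H} — which enters through the mean-curvature term of the Michael--Simon--Sobolev inequality and the $(|H|+1)$ factor of Lemma \ref{Mbndryint} — that supplies the strictly sublinear exponent needed to close the argument uniformly in $T$.
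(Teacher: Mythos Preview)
Your proposal has a genuine gap: the iteration you set up cannot close, for precisely the reason you flag in your last paragraph but do not actually resolve. The comparison you derive has the shape $\|A(h)\|\leq C(h-k)^{-\alpha}\bigl(\|A(k)\|+\|A(k)\|^{1/2}\bigr)$, and when $\|A(k)\|\geq 1$ the linear term dominates, reducing this to $\|A(h)\|\leq 2C(h-k)^{-\alpha}\|A(k)\|$. That is a Stampacchia inequality with $\beta=1$, and Lemma~\ref{Stampacciait} requires $\beta>1$; with $\beta=1$ you can only propagate a bound you already have, not create one. The linear term is unavoidable in your scheme: both Lemma~\ref{Mbndryint} and Corollary~\ref{fullsobolev} contain the factor $(|H|+1)$, and the ``$+1$'' contribution produces $\int_0^T\int_{A(k)}1=\|A(k)\|$ with no gain. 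Your claim that one can start the iteration from ``the trivially finite $\|A(k_1)\|$'' is exactly the circularity: $\|A(k_1)\|$ is finite for each fixed $T$, but it is not \emph{a priori} bounded independently of $T$, which is the whole content of the proposition.

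The paper's proof is quite different and avoids iteration entirely at this stage. It introduces the auxiliary function $\kappa=\theta^2 r^2$, computes $\ho\kappa$ and the boundary flux $\nabla_\mu\kappa$, and observes that after integrating by parts the boundary contribution cancels against interior terms to yield
\[
\ddt{}\int_M \kappa\,\dmu \leq \int_M -|\mbt^\top|^2 + C_1 H^2\,\dmu.
\]
Integrating in $t$ and invoking Remark~\ref{L2H} directly gives $\int_0^T\int_M|\mbt^\top|^2\,\dmu\,dt\leq C$ independent of $T$. Since on $A(k)$ one has $\ip{\nu}{\mbt}$ small, hence $|\mbt^\top|^2=1-\ip{\nu}{\mbt}^2\geq \tfrac12$, the bound $\|A(k)\|\leq 2C$ follows immediately. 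The key idea you are missing is a test function whose evolution produces a \emph{good-signed} term that is pointwise bounded below on $A(k)$; no level-set comparison is needed.
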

\begin{proof}
 We consider the bounded function $\kappa=\theta^2r^2$. We will calculate the time derivative of the integral of $f$ over the manifold. From Lemmas \ref{evolu} and \ref{evolr} we see
\begin{flalign*}
 \ho \kappa &=r^2\left[ \frac{4\theta}{r}\ip{\n r}{\n \theta} - 2|\n \theta |^2 \right] + \theta^2\left[ -2 |\mbt^\top|^2 - 2|\mb{r}^\top|^2 \right]\\
&\qquad\qquad\qquad\qquad\qquad\qquad\qquad\qquad\qquad -8 \theta r \ip{\n r}{\n \theta}\ \ .
\end{flalign*}
At the boundary we have $\n_\mu \kappa = \theta^2\n_\mu r^2$ and so calculate
\begin{flalign*}
\int_{\partial M} \n_\mu \kappa\dmu_\partial &= \int_M \text{div}(\theta^2 \n r^2) \dmu = \int_M \ip{\n \theta^2}{ \n r^2} + \theta^2 \Delta r^2 \dmu\\
&=\int_M  4\theta r\ip{\n \theta}{ \n r} + \theta^2 \left[-2rH \ip{\nu}{\mb{r}}+2|\mbt^\top|^2 +2|\mb{r}^\top|^2\right] \dmu\ \ .
\end{flalign*}
Therefore, by divergence theorem, 
\begin{flalign*}
 \ddt{}\int_M \kappa \dmu &= \int_M \ho \kappa - H^2 \kappa \dmu + \int_{\partial M} \n_\mu \kappa \dmu\\
&=\int_M  - 2r^2|\n \theta |^2 -2\theta^2rH\ip{\nu}{\mb{r}} -H^2 \kappa \dmu\ \ .
\end{flalign*}
%&=\int_M 4r\theta \ip{\n r}{ \n \theta} - 2r^2|\n \theta |^2 - 2\theta^2|\mbt^\top|^2 - 2\theta^2|\mb{r}^\top|^2- 8r\theta\ip{\n r}{\n \theta}\\
%&\qquad\qquad+ 4\theta r\ip{\n \theta}{ \n r} -2\theta^2rH\ip{\nu}{\mb{r}} +2\theta^2|\mbt^\top|^2 +2\theta^2|\mb{r}^\top|^2 - H^2f \dmu\\
We note that $\ip{\nu}{\mb{r}}^2+\ip{\nu}{\mbt}^2 \leq |\nu|^2 = 1$ and so $\ip{\nu}{\mb{r}}^2 \leq 1-\ip{\nu}{\mbt}^2 = |\mbt^\top|^2$. Since $|\n \theta |^2 = \frac{|\mbt^\top|^2}{r^2}$, using Young's inequality we see
\begin{flalign*}
 \ddt{}\int_M \kappa \dmu &\leq \int_M -2 |\mbt^\top|^2+ 2(\theta^2 r |H|)( |\ip{\nu}{\mb{r}}|) - H^2 f \dmu \\
&\leq \int_M  - |\mbt^\top|^2+ C_1 H^2 \dmu \ \ .
\end{flalign*}
for some $C_1>0$ by the boundedness of $r$ and $\theta$.

Now integrating with respect to time as in Remark \ref{L2H} and using the $L^2$ bound on $H$,
\[\int_0^T\int_M |\mbt^\top|^2 \dmu \,dt \leq C_1 \int_0^T \int_M H^2 \dmu \,dt + \int_{M_0} \theta^2 r^2 \dmu\Big|_{t=0} \leq C_3\]
for some constant $C_3>0$ depending on the bounds on $\theta^2$, $r^2$ and $|M_0|$ but not on $T$. On the region $A(k)$,  $\ip{\nu}{\mbt} \leq \frac{1}{r} e^{-k}$ and so choosing $k_2$ large enough that $\ip{\nu}{\mbt}\leq \frac{1}{\sqrt{2}}$ then 
\begin{equation} \|A(k)\| \leq 2\int_0^T\int_M |\mbt^\top|^2 \dmu \, dt \leq 2C_3\ \ . \label{sgr}
\end{equation}
\end{proof} 

We now put these together to give the gradient estimate.
\begin{theorem}[Gradient Estimate]\label{generalgradest}
 There exists a $C_Q>0$ depending only on $n$, $\Sigma$ and $M_0$ such that for all time $Q\leq C_Q$. 
\end{theorem}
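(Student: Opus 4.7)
The plan is to run a Stampacchia iteration on the measure function $\phi(k) := \|A(k)\|$. Lemma~\ref{Stampacciait} requires an inequality of the form
\[
\phi(h) \leq \frac{C}{(h-k)^\alpha}\phi(k)^\beta, \qquad h > k \geq k_0,
\]
with $\beta > 1$, after which Proposition~\ref{thefinalestimate} (applied at $k_0 := \max\{k_1,k_2\}$) supplies a $T$-independent value of $\phi(k_0)$ and the lemma yields $\phi(k_0+d) = 0$ for an explicit $d$. This forces $Q \leq k_0 + d$ almost everywhere, and hence everywhere on $M \times [0,T]$ by smoothness, giving the claimed $T$-independent bound $C_Q$. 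The difficulty is that Lemma~\ref{Qkpestimate} combined with Chebyshev only yields $\beta = 1$; a Sobolev interpolation is needed to upgrade the exponent.

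First I would revisit the proof of Lemma~\ref{Qkpestimate} and retain the non-positive $|\n Q|^2$ contributions that were discarded there: using $Q_k^{p-2}|\n Q|^2 = \tfrac{4}{p^2}|\n Q_k^{p/2}|^2$, the same induction on even $p$ in fact yields the stronger energy estimate
\[
\sup_{t\in[0,T]}\int_M Q_k^p \, \dmu \;+\; \int_0^T\!\!\int_M |\n Q_k^{p/2}|^2 \, \dmu \, dt \leq C(p)\|A(k)\|.
\]
Next, I would apply the Michael--Simon--Sobolev inequality (Corollary~\ref{fullsobolev}) to $u^2 = Q_k^p$ at each time slice, absorbing the $(|H|+1)Q_k^p$ term with the bound $H^2 \leq C_H$ from Corollary~\ref{Hbound} together with Young's inequality, to obtain the spatial bound
\[
\Bigl(\int_M Q_k^{pn/(n-1)}\,\dmu\Bigr)^{(n-1)/n} \leq C\int_M \bigl(Q_k^p + |\n Q_k^{p/2}|^2\bigr)\,\dmu.
\]
H\"older interpolation between $L^p$ and $L^{pn/(n-1)}$ on each slice (exponent $\theta = n/(n+1)$) then gives $\int_M Q_k^{p(n+1)/n} \leq (\int_M Q_k^p)^{1/n}(\int_M Q_k^{pn/(n-1)})^{(n-1)/n}$; integrating in time and feeding in the energy estimate produces
\[
\int_0^T\!\!\int_M Q_k^{p(n+1)/n} \, \dmu \, dt \leq C'(p)\|A(k)\|^{1 + 1/n}.
\]
A final application of Chebyshev on $A(h)$ yields the Stampacchia inequality with $\alpha = p(n+1)/n$ and $\beta = 1 + 1/n > 1$, so Lemma~\ref{Stampacciait} closes the argument.

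The main obstacle is executing the parabolic Sobolev interpolation rigorously on the moving hypersurface with Neumann boundary: the Michael--Simon--Sobolev inequality at our disposal is of $L^{n/(n-1)}$ type rather than $L^{2n/(n-2)}$, so it must be applied to an appropriate power of $Q_k$ before interpolating; the boundary contributions arising from integration by parts against the weight $e^{\lambda\theta}$ need to be controlled by Lemma~\ref{Mbndryint}, and all constants must be tracked carefully to ensure they depend only on $n$, $\Sigma$ and $M_0$ (independently of $T$), which is what makes the resulting gradient bound global in time.
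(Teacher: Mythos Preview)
Your proposal is correct and follows essentially the same route as the paper. The only cosmetic difference is that the paper applies Corollary~\ref{fullsobolev} to $f=Q_k^p$ directly inside the differential inequality for $\int_M Q_k^p\,\dmu$ (rather than first isolating an $|\nabla Q_k^{p/2}|^2$ energy term and then invoking Sobolev via Young's inequality), and runs the final Chebyshev step at exponent $p$ rather than $p(n+1)/n$, yielding $\beta = 1+\tfrac{1}{n+1}$ instead of your $1+\tfrac{1}{n}$; both versions close the Stampacchia iteration identically.
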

\begin{proof} By Lemma \ref{evolvQ} we may calculate for $p>2$ that 
\begin{flalign*}
 \ho Q_k^p  &\leq  pQ_k^{p-1}\ho Q - p(p-1) Q_k^{p-2}|\n Q|^2\\
&\leq -pQ_k^{p-2}\left( Q_k |\n Q|^2  +(p-1)|\n Q|^2 \right)\ \ .
\end{flalign*}
Using $C_n$ as in Lemma \ref{Qkpestimate} we see using the bound on $|H|$ and Lemma \ref{Mbndryint} then
\begin{flalign*}
 \frac{d}{dt}\int_M Q_k^p\dmu &\leq \int_M Q_k^{p-2}\Big[ -pQ_k |\n Q|^2  -p(p-1)|\n Q|^2 -H^2Q_k^2\\
&\qquad\qquad\qquad\qquad\qquad\qquad\qquad\qquad\quad\ \, +C_n Q_k +C_n |\n Q| \Big] \dmu\\
&\leq \int_M Q_k^{p-2}\left[C_n Q_k^2 +C_n\right] \dmu - C_n \int_M pQ_k^{p-1}|\n Q| + (|H|+1)Q_k^p \dmu\\
&\leq C_2 \int_M  Q_k^{p-2}+ Q_k^{p} \, \dmu - C_1\left[ \int_M Q_k^{\frac{np}{n-1}} \dmu \right]^\frac{n-1}{n}\ \ .
\end{flalign*}
where on the last line we used Corollary \ref{fullsobolev} with $f=Q_k^p$. Choosing $k>k_3=\max\{\underset{x\in M_0}\sup Q, k_1, k_2\}$ and integrating with respect to time 
\[\underset{t \in [0,T]} \sup \int_M Q_k^p \dmu + C_1\int^T_0  \left[ \int_M Q_k^{\frac{np}{n-1}} \dmu \right]^\frac{n-1}{n} dt\leq C_2\int_0^T\int_M  Q_k^{p-2}+ Q_k^{p} \dmu \, dt\ \ .
\]
We now deal with the left hand side as usual (see \cite{Huiskengraph}), and so after repeated use of the H\"older inequality we have for even $p$,
\begin{flalign*}
\left( \int_0^T \int_M Q_k^{\frac{p(n+1)}{n}} \dmu \, dt \right)^\frac{n}{n+1}&\leq C_n \int_0^T\int_M  Q_k^{p-2}+ Q_k^{p} \dmu \, dt\\
&\leq C_3 \|A(k)\|\ \ ,
\end{flalign*}
where we used Lemma \ref{Qkpestimate}. For $k_3<h<k$, the H\"older inequality now implies
\[|h-k|^p\|A(h)\|\leq \int_0^T\int_M Q_k^{p} \dmu\,dt \leq C_3 \|A(k)\|^{2-\frac{n}{n+1}}\ \ .\]
We may now apply Lemma \ref{Stampacciait} to get that $\|A(k)\|=0$ for $k=k_3+D$ where \[D^p=C_3 2^{2+n} \|A(k_1)\|^{\frac{1}{n+1}}\ \ .\]
Setting $p=4$, by Proposition \ref{thefinalestimate} the Theorem is proved.
\end{proof}

\section{Long time existence and convergence}

We now state and prove the main Theorem.

\begin{theorem}\label{PStorustheorem}
 Suppose $\Sigma$ is a torus of rotation, $M_0$ is a manifold satisfying the boundary condition that nowhere contains the vector field $\mbt$ in its tangent space. Then a solution to equation (\ref{MCF}) with initial data $M_0$ exists for all time and converges uniformly to a flat cross-section of the torus.
\end{theorem}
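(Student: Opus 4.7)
The plan is to combine the gradient estimate of Theorem \ref{generalgradest} with parabolic regularity to obtain long time existence, and then to exploit the $L^2$ bound on $H$ from Remark \ref{L2H} together with a subsequential limit argument to establish convergence to a flat cross-section.

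\textbf{Long time existence.} The gradient bound $Q \leq C_Q$ says that $\widetilde v = \frac{1}{\langle \nu, \mbt\rangle_{|\mathbf F}} \leq r_1 e^{C_Q}$ is uniformly bounded, so the coefficients of the quasilinear equation (\ref{pMCF}) are uniformly elliptic in space with bounded coefficients depending only on $n$, $\Sigma$, and $M_0$. Combined with the uniform bound on $H$ from Corollary \ref{Hbound} and the oblique boundary condition $\gamma\cdot Du = 0$, I would invoke standard parabolic Schauder estimates for quasilinear oblique boundary value problems (e.g.\ the Krylov--Safonov interior/boundary $C^\alpha$ estimate bootstrapped through linear Schauder theory) to obtain uniform $C^{k,\alpha}$ bounds on $u$ independent of $T$, for every $k$. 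These estimates allow the flow to be extended past any finite time, yielding a smooth solution for $t \in [0,\infty)$ with uniform $C^k$ bounds on all geometric quantities on each $M_t$.

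\textbf{Subsequential convergence to a minimal surface.} By Remark \ref{L2H} we have $\int_0^\infty \int_{M_t} H^2 \, \dmu \, dt \leq |M_0|$, so there exists a sequence $t_i \to \infty$ with $\int_{M_{t_i}} H^2 \, \dmu \to 0$. Using the uniform $C^{k,\alpha}$ regularity from the previous step, I would pass to a smooth subsequential limit $M_\infty$ in the graphical parametrization, that is, $u(\cdot,t_i) \to u_\infty$ in $C^k(\overline \Omega)$ for every $k$ (along a subsequence). The $L^2$ convergence of $H$ to $0$, upgraded by the $C^k$ convergence, forces $H_{M_\infty} \equiv 0$, so $M_\infty$ is a minimal hypersurface with graphical parametrization by $u_\infty$, satisfying the Neumann condition $\gamma\cdot Du_\infty = 0$ on $\partial \Omega$. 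By Corollary \ref{thetabound} and the bound on $\theta$ passed to the limit, $u_\infty$ inherits the transversality and boundary data.

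\textbf{Identification of the limit and full convergence.} The divergence computation in the remark after Corollary \ref{Hbound} shows $\int_{M_\infty} \frac{H_{M_\infty}}{v_\infty}\, \dmu = 0$ automatically since $M_\infty$ is minimal; more importantly, any minimal hypersurface which is graphical over $\Omega$ in the parametrization (\ref{paramF}) and satisfies the Neumann condition must have $u_\infty$ constant. Indeed, integrating the minimal surface equation against a test function coming from $\mbt$ and using $\gamma \cdot Du_\infty = 0$ (or equivalently unique solvability of the Neumann problem for the minimal graph equation over $\Omega$) forces $u_\infty \equiv c$ for some constant $c$, corresponding to a flat cross-section. The expected main obstacle is promoting subsequential convergence to full convergence, since there is an a priori $S^1$--family of flat cross-sections: I would handle this by noting that $c$ must be consistent with the preserved average of $u$ determined by initial data. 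Concretely, a Lyapunov-type quantity (such as the area $|M_t|$, which is monotonically decreasing with $\tfrac{d}{dt}|M_t| = -\int_{M_t} H^2 \dmu$) together with the uniqueness of the $C^k$ subsequential limit (once $c$ is pinned down) implies that every subsequence has the \emph{same} limit, and hence the whole family $u(\cdot,t)$ converges uniformly as $t \to \infty$ to a flat cross-section of $\Sigma$.
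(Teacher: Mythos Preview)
Your long-time existence argument matches the paper's: the gradient bound from Theorem~\ref{generalgradest} makes (\ref{pMCF}) uniformly parabolic with an oblique linear boundary condition, and standard quasilinear theory (the paper cites Lieberman) gives existence for all time with uniform higher-order estimates.

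Your convergence argument takes a genuinely different route. The paper never passes to a subsequential limit or invokes a classification of minimal hypersurfaces. Instead it proves directly that
\[
\int_0^\infty\!\!\int_\Omega \Big(\tfrac{\partial u}{\partial t}\Big)^2 + |Du|^2\,dx\,dt < \infty,
\]
using $\partial_t u = -H\widetilde v/r$ together with Remark~\ref{L2H} for the first term, and the estimate (\ref{sgr}) from Proposition~\ref{thefinalestimate} (i.e.\ $\int_0^\infty\!\int_M |\mbt^\top|^2\,\dmu\,dt<\infty$) for the second. Poincar\'e then forces $u(\cdot,t)-u_\Omega(t)\to 0$, and monotonicity of $\sup u$ and $\inf u$ from Corollary~\ref{thetabound} pins down a unique limiting constant. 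Your compactness approach is also a legitimate strategy, and your identification of any graphical minimal limit with Neumann data as a flat cross-section is correct (it follows from the maximum principle applied to the elliptic equation for $\theta$ in Lemma~\ref{evolu}).

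However, your upgrade from subsequential to full convergence has a real gap. There is no ``preserved average of $u$'' in this problem: neither $\int_\Omega u\,dx$ nor $\int_M \theta\,\dmu$ is conserved under (\ref{pMCF}), so you cannot select $c$ that way. And the area functional, while a good Lyapunov quantity, does not distinguish between flat cross-sections since they all have the same area; hence ``uniqueness of the limit once $c$ is pinned down'' begs the question. The clean fix, which the paper uses, is Corollary~\ref{thetabound}: since $\sup_\Omega u(\cdot,t)$ is nonincreasing and $\inf_\Omega u(\cdot,t)$ is nondecreasing, both are monotone and bounded, hence convergent; any subsequential uniform limit $u_\infty\equiv c$ forces $\sup u(t_i)\to c$ and $\inf u(t_i)\to c$, so in fact $\sup u(t)\to c$ and $\inf u(t)\to c$ along the whole flow, giving full uniform convergence.
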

\begin{proof}
We take $\Omega$ to be a cross-section of the torus $\Sigma$ and rewrite the manifold as a graph, $u_0$, over the cross-section as in section \ref{thetorus} so that the manifold may be parametrised by equation (\ref{paramF}). At a point on the flowing manifold, this will be equal to the function $\theta$. As noted in section \ref{thetorus}, for both uniform parabolicity of equation (\ref{pMCF}) and a gradient estimate on $u$ we need to bound the function $\widetilde{v}=\frac{1}{\ip{\mbt}{\nu}}$. We also note that while $\widetilde{v}$ is finite we may write $M_t$ as a graph. 

Since $\widetilde{v}=rv$, Theorem \ref{generalgradest} gives the upper bound $\widetilde{v} \leq C$, and so we have both uniform parabolicity and a gradient estimate. Corollary \ref{thetabound} also gives $C^0$ bounds on $u$. Therefore, by standard methods we have existence for all time. For example since equation (\ref{pMCF}) has \emph{linear} boundary conditions, with trivial modifications we may apply the arguments of \cite[Section 8.2 and Chapter 12]{Lieberman}.

For convergence we consider integrals of the derivatives of the graph over $\Omega$. We have $\ddt{u}=-Hv$ and so using our gradient estimate and Corollary \ref{L2H},
\[ \int_0^T \int_\Omega \left(\ddt{u}\right)^2 dx\, dt =  \int_0^T \int_M H^2v^2 \dmu\, dt \leq C_1  \int_0^T \int_M H^2 \dmu\, dt\leq C_2\]
where $C_1, C_2>0$ are constants independant of $T$. 

We see that in coordinates $|\mbt^\top|^2 = 1- \ip{\nu}{\mbt}^2= \frac{\widetilde{v}^2-1}{\widetilde{v}^2}=\frac{|Du|^2}{v^2}$. Therefore using the gradient estimate again
\[ \int_0^T \int_\Omega |Du|^2 dx\,dt \leq C_3 \int_0^T\int_\Omega \frac{r|Du|^2}{v} \widetilde{v} dx \, dt = C_3 \int_0^T \int_M |\mbt^\top|^2 \dmu\, dt \leq C_4\]
for constants $C_3, C_4 >0$ where we used equation (\ref{sgr}). 

Therefore there exists a constant $C>0$ such that
\[ \int_0^\infty \int_\Omega \left(\ddt{u}\right)^2+ |Du|^2 dx\,dt \leq C\ \ .\]
Writing $u_\Omega(t)=\frac{1}{|\Omega|}\int_\Omega u(x, t)dx$ for the integral average of $u$ at any time then by the Poincar\'e inequality, the above is enough to ensure that $u(x,t)\rightarrow u_\Omega(t)$ uniformly as $t\rightarrow \infty$. Since we also have from Corollary \ref{thetabound} that $\underset{x\in \Omega}\inf \, u(x,t)$ is nondecreasing and $\underset{x\in \Omega}\sup\, u(x,t)$ is nonincreasing then in fact $u(x,t)$ converges uniformly to a constant as $t\rightarrow \infty$. This corresponds to uniform convergence of $M_t$ to a flat cross-section of the torus.
\end{proof}

\bibliographystyle{plain}
\bibliography{$HOME/dos/bibblee/bib}

\end{document}